\newtheorem{theorem}{Theorem}[section]
\newtheorem{corollary}[theorem]{Corollary}
\newtheorem{lemma}[theorem]{Lemma}
\newtheorem{proposition}[theorem]{Proposition}
\theoremstyle{definition}
\newtheorem{definition}[theorem]{Definition}
\newtheorem{remark}[theorem]{Remark}
\theoremstyle{remark}
\renewcommand{\theclaim}{\textup{\theclaim}}
\newtheorem*{acknowledgements}{Acknowledgements}
\numberwithin{equation}{section}
\def\openone%{\hbox{\upshape \small1\kern-3.3pt\normalsize1}}
\newbox\ipbox
\newcommand{\diracb}[1]{\left\langle #1\mathrel{\mathchoice

{\setbox\ipbox=\hbox{$\displaystyle \left\langle\mathstrut
#1\right.$}

\vrule height\ht\ipbox width0.25pt depth\dp\ipbox}

{\setbox\ipbox=\hbox{$\textstyle \left\langle\mathstrut
#1\right.$}

\vrule height\ht\ipbox width0.25pt depth\dp\ipbox}

{\setbox\ipbox=\hbox{$\scriptstyle \left\langle\mathstrut
#1\right.$}

\vrule height\ht\ipbox width0.25pt depth\dp\ipbox}

{\setbox\ipbox=\hbox{$\scriptscriptstyle \left\langle\mathstrut
#1\right.$}

\vrule height\ht\ipbox width0.25pt depth\dp\ipbox}

}\right. }
\newcommand{\dirack}[1]{\left. \mathrel{\mathchoice

{\setbox\ipbox=\hbox{$\displaystyle \left.\mathstrut
#1\right\rangle$}

\vrule height\ht\ipbox width0.25pt depth\dp\ipbox}

{\setbox\ipbox=\hbox{$\textstyle \left.\mathstrut
#1\right\rangle$}

\vrule height\ht\ipbox width0.25pt depth\dp\ipbox}

{\setbox\ipbox=\hbox{$\scriptstyle \left.\mathstrut
#1\right\rangle$}

\vrule height\ht\ipbox width0.25pt depth\dp\ipbox}

{\setbox\ipbox=\hbox{$\scriptscriptstyle \left.\mathstrut
#1\right\rangle$}

\vrule height\ht\ipbox width0.25pt depth\dp\ipbox}

} #1\right\rangle}
\def\blfootnote{\xdef\@thefnmark{}\@footnotetext}
\begin{document}
\title[INFINITE-DIMENSIONAL MEASURE SPACES AND FRAME ANALYSIS.]
{INFINITE-DIMENSIONAL MEASURE SPACES AND FRAME ANALYSIS.}
\author{Palle E.T. Jorgensen}
\address[Palle E.T. Jorgensen]{Department of Mathematics\\
The University of Iowa\\
14 MacLean Hall\\
Iowa City, IA 52242}
\email{jorgen@math.uiowa.edu}

\author{Myung-Sin Song}
\address[Myung-Sin Song]{Department of Mathematics and Statistics\\
Southern Illinois University Edwardsville\\
Box 1653, Science Building\\
Edwardsville, IL 62026}
\email{msong@siue.edu}\ 

\thanks{}
\subjclass[2000]{Primary 42C40, 46L60, 46L89, 47S50 }
\keywords{Hilbert space, frames, reproducing kernel, Karhunen-Lo\`{e}ve} 
\begin{abstract}
We study certain infinite-dimensional probability measures in connection with frame 
analysis. Earlier work on frame-measures has so far focused on the case of 
finite-dimensional frames. We point out that there are good reasons for a sharp 
distinction between stochastic analysis involving frames in finite vs infinite 
dimensions. For the case of infinite-dimensional Hilbert space $\mathcal{H}$, we study 
three cases of measures. We first show that, for $\mathcal{H}$ infinite dimensional, 
one must resort to infinite dimensional measure spaces which properly contain 
$\mathcal{H}$. The three cases we consider are: (i) Gaussian frame measures, 
(ii) Markov path-space measures, and (iii) determinantal measures.  
\end{abstract}

\maketitle \tableofcontents
\section{Introduction and Setting.}
\label{sec:1}

Over the past two decades, frames have proved to be powerful tools in signal 
processing for a number of reasons, especially on account of their resilience to 
additive noise, to quantization; and because of their numerical stability in their 
use in the reconstruction step, they have improved our ability to capture significant 
signal characteristics. Frame theory is now a dynamic  subject with applications that 
include variety of areas in both mathematics and engineering: operator theory, 
harmonic analysis, wavelet theory, sampling theory, nonlinear sparse approximation, 
wireless communication, data transmission with erasures, filter banks, signal 
processing, image processing, geophysics, quantum computing, sensor networks, and 
more.

In a host of applications, starting with signal and image processing, one makes use of 
inner-product spaces (in our case, choices of suitable Hilbert spaces) in achieving 
efficient signal-representations.
In general, orthogonal expansions are not available. Nonetheless, in many signal 
processing problems,  it is still possible to find overcomplete basis expansions, 
called frame expansions, see the references cited below. For example, in 
analysis/synthesis problems, when
we sample an analog signal above the Nyquist rate, the amplitudes will be coefficients 
in a suitable frame expansion. Such decompositions have received extensive attention 
in the literature when the decomposition parameter is assumed discrete; but, 
nonetheless, the frame-decompositions take a probabilistic form which we shall 
emphasize below. More generally, staying with the overcomplete framework, we present 
here instead a continuous, and a more versatile, probability space approach to these 
expansion problems.  Our applications include determinantal measures, Gaussian frame 
measures; and, more generally, to the setting of Markov path-space measures.

We study frames in Hilbert space $\mathcal{H}$, i.e., systems of vectors in 
$\mathcal{H}$ which 
allow computable representations of arbitrary vectors in $\mathcal{H}$, (especially 
the case when $\mathcal{H}$ is infinite-dimensional) in a way that is analogous to 
more familiar basis expansions. Frames are also called ``over complete" 
systems, and they generalize the better known orthonormal bases (O.N.B.s). Their 
applications include signal processing and wavelet theory, to name only a few.

Frames (Definition \ref{D:1.1}) are systems of vectors in Hilbert space $\mathcal{H}$  
which allow for ``effective" analysis and reconstruction for vectors in $\mathcal{H}$; 
details below.  As is known from the literature on frames, both pure and applied  (see 
e.g., \cite{Pe15}, \cite{FJMP15}, \cite{AuBe15}, \cite{HoLa15}, \cite{PeHaMo15}, 
\cite{QuHiSh15}), there is a scale of ``basis-like" properties that frames may have: 
in one end of the spectrum, there are the orthonormal bases (O.N.B.s), then Parseval 
frames, and in the other end of the spectrum, there are systems of vectors with 
frame-like properties, but where we may lack one of the two bounds, lower or upper, but 
nonetheless, for some other reason, we may still get analysis and reconstruction 
formulas.

Since the early days of Hilbert space axioms and quantum theory and potential theory, 
probability has always played an important role, for example such tools as balayage; 
but it is not until relatively recently that the role of probability has been studied 
systematically in connection with frame analysis. In our present approach, we have been 
especially inspired by the important paper by Ehler and Okoudjou 
\cite{Eh12, EhOk12, EhOk13};  but 
the work in \cite{Eh12, EhOk12, EhOk13}; and in related papers, has so far focused on the case of 
finite-dimensional frames. As we point out below, there are good reasons for a sharp 
distinction between stochastic analysis involving frames in finite vs infinite 
dimensions. The three cases, of measures in infinite dimensions we shall consider are 
the Gaussian measures, Markov path-space measures, and determinantal measures; but our 
present emphasis will be on certain Gaussian families (section \ref{sec:4}).

In our paper, we shall adopt a general notion of probabilistic frames, referring simply 
to methods in frame theory involving probability and stochastic analysis. By 
``frame analysis" we shall refer to a setup where it is possible to construct the dual 
pair of operators, an \textit{analysis operator}, and an associated \textit{synthesis 
operator}. For technical details, see the next section.

Frames let us formulate a harmonic analysis of practical problems, but, so far, there 
are only few harmonic analysis tools available for the analysis of the frames 
themselves. Nonetheless, there are beginnings to a theory of ``frame measures," but so 
far only covering the case when $\mathcal{H}$ is \textit{finite-dimensional}. Our first 
result (section \ref{sec:3}) shows that, unless one passes to a larger measure space, 
the notions of frame measures in finite dimension simply do not go over to 
infinite-dimensional $\mathcal{H}$. On the other hand, we show (section \ref{sec:4}) that 
there is a way to build ambient measures spaces in such a way that we arrive at a rich 
family of \textit{Gaussian wavelet measures}, covering the case when $\mathcal{H}$ is 
infinite-dimensional.

We also study other families of measures associated with frames in infinite dimensions, 
e.g.  Markov measures, and determinantal measures which seem promising. This endeavor 
takes advantage of the probabilistic features already inherent in the axioms of Hilbert 
space as they were developed in the foundations of quantum theory; i.e., the study of 
transition probability, referring to transition between states, for example states of 
different energy levels in atomic models.

The applied mathematicians who use frames have, so far, only developed very few 
quantitative gauges which will tell us how ``different" two given frames might be; or 
will allow us to make precise ``how much" better one frame is as compared to anyone in 
a set of alternatives.

\subsection{Frame Measures} 
\label{sec:1.1}
To help readers appreciate some key features regarding frame measures in the finite 
dimensional case, and their applications, we review some highpoints from \cite{EhOk13}.

In \cite{Eh12, EhOk12, EhOk13}; finite frames in $\mathbb{R}^{N}$ are considered, where frame vectors 
are viewed as discrete mass distributions on $\mathbb{R}^{N}$, the frame concepts 
are extended to probability measures, and the properties of probabilistic frames are 
summarized.  Let $\mathcal{P}:=\mathcal{P}(\mathcal{B}, \mathbb{R}^{N})$ denote the 
collection of \textit{probability measures} on $\mathbb{R}^{N}$ with respect to the 
Borel $\sigma-$algebra $\mathcal{B}$.  The support of $\mu \in \mathcal{P}$, denoted by 
$supp(\mu)$, is the sent of all $x \in \mathbb{R}^{N}$ such that for all open 
neighborhoods $U_{x} \subset \mathbb{R}^{N}$ of $x$, we have $\mu(U_{x})>0$.  
Set $\mathcal{P}(K):=\mathcal{P}(\mathcal{B},K)$ for those probability measures in 
$\mathcal{P}$ whose support is contained in $K \subset \mathbb{R}^{N}$.  The linear 
span of $supp(\mu)$ in $\mathbb{R}^{N}$ is denoted by $E_{\mu}$.
\begin{definition}
\label{D:1.0.1}
\cite{EhOk13} A Borel probability measure $\mu \in \mathcal{P}$ is a 
\textit{probabilistic frame} if there exists $0<A\leq B <\infty$ such that 
\begin{equation}
\label{eq:1.0.1}
  A\|x\|^{2}\leq\int_{\mathbb{R}^{N}}|\langle x,y \rangle|^{2}d\mu(y)\leq B\|x\|^{2}, 
  \quad \text{for all } \in \mathbb{R}^{N}.
\end{equation}
The constants $A$ and $B$ are called lower and upper probabilistic frame bounds, 
respectively.  When $A=B$, $\mu$ is called a tight probabilistic frame.  
\end{definition}

Let
\begin{equation}
\label{eq:1.0.2}
  \mathcal{P}_{2}:=\mathcal{P}_{2}(\mathbb{R}^{N})=
  \{\mu\in \mathcal{P}:M_{2}^{2}(\mu):=\int_{\mathbb{R}^{N}}\|x\|^{2}d\mu(x)<\infty\} 
\end{equation}
be the (convex) set of all probability measures with finite second moments. Frame 
measures in $\mathbb{R}^{N}$ are in $\mathcal{P}_{2}$, and they satisfy 
$E_{\mu}=\mathbb{R}^{N}$.  There 
exists a natural metric on $\mathcal{P}_{2}$ called the $2-$\textit{Wasserstein metric}, 
which is given by
\begin{equation}
\label{eq:1.0.3}
  W_{2}^{2}(\mu, \nu):=\min \{\int_{\mathbb{R}^{N}\times\mathbb{R}^{N}} 
  \|x-y\|^{2}d\gamma(x,y), \gamma \in \Gamma(\mu, \nu)\},
\end{equation}
where $\Gamma(\mu, \nu)$ is the set of all Borel probability measures $\gamma$ on
$\mathbb{R}^{N}\times\mathbb{R}^{N}$ whose marginals are $\mu$ and $\nu$, 
respectively, i.e., $\gamma(A\times \mathbb{R}^{N})=\mu(A)$ and 
$\gamma(\mathbb{R}^{N}\times B)=\nu(B)$ for all Borel subsets $A$, $B$ in 
$\mathbb{R}^{N}$.  \cite{EhOk13} 

Let $\mu \in \mathcal{P}$ be a probabilistic frame.  The probabilisitic 
\textit{analysis operator} is given by
\begin{equation}
\label{eq:1.0.4}
  T_{\mu}:\mathbb{R}^{N}\rightarrow L^{2}(\mathbb{R}^{N}, \mu), \quad 
  x \mapsto \langle x, \cdot \rangle. 
\end{equation}
Its adjoint operator is defined by
\begin{equation}
\label{eq:1.0.5}
  T_{\mu}^{*}:L^{2}(\mathbb{R}^{N}, \mu) \rightarrow \mathbb{R}^{N}, \quad 
  f \mapsto \int_{\mathbb{R}^{N}}f(x)xd\mu(x)   
\end{equation}
and is called the probabilistic \textit{synthesis operator}. The probabilistic Gramiam 
operator of $\mu$ is $G_{\mu}=T_{\mu}T_{\mu}^{*}$.  \cite{EhOk13}  The probabilistic frame operator of 
$\mu$ is $S_{\mu}=T_{\mu}^{*}T_{\mu}$, 
\[
  S_{\mu}:\mathbb{R}^{N} \rightarrow \mathbb{R}^{N}, \quad 
  S_{\mu}(x)=\int_{\mathbb{R}^{N}}\langle x,y \rangle yd\mu(y).
\]
The Gramian of $\mu$, $G_{\mu}$ is the integral operator defined on 
$L^{2}(\mathbb{R}^{N}, \mu)$ by
\begin{equation}
\label{eq:1.0.6}
  G_{\mu}f(x)=T_{\mu}T_{\mu}^{*}f(x)=\int_{\mathbb{R}^{N}}K(x,y)f(y)d\mu(y) 
  =\int_{\mathbb{R}^{N}}\langle x,y \rangle f(y)d\mu(y). 
\end{equation}
Note that $\mathcal{H}=\mathbb{R}^{N}$ and $N<\infty$ in \cite{EhOk13}.
In the next section we extend these tools to infinite dimensions, pointing out a 
number of subtleties, and differences between the two cases, finite vs infinite.

\subsection{Infinite Dimensions} 
\label{sec:1.2}
If $\mathcal{H}$ is an \textit{infinite dimensional} Hilbert space, we shall show that 
the formulas (\ref{eq:1.0.2}), (\ref{eq:1.0.3}), (\ref{eq:1.0.4}) and (\ref{eq:1.0.5})
carry over from $\mathbb{R}^{N}$, $N< \infty$, to dim$\mathcal{H}=\aleph_{0}$; but it 
will be necessary to create an ambient measure space $(\Omega, \mathcal{F})$ where 
$\Omega$ is a certain vector space containing $\mathcal{H}$.  We will show that in this 
case, the four formulas carry over with the following modifications:
In (\ref{eq:1.0.4}), we show that $y \mapsto \langle x,y \rangle$ extends from 
$\mathcal{H}$ to $\Omega$; and the integral in (\ref{eq:1.0.5}) will then be
\begin{equation}
\label{eq:1.0.7}
  x=\int_{\Omega}\langle x, \omega\widetilde{\rangle} \omega d\mu(\omega)
  \quad \text{in $\mathcal{H}$}
\end{equation}
where $\langle, \widetilde{\rangle}$ refers to this extension.  But appropriate 
generalizations of (\ref{eq:1.0.6}) are much more subtle.

The extension of the results in (\ref{eq:1.0.4}) and (\ref{eq:1.0.5}) will involve 
this $\mathcal{H} \rightarrow \Omega$ extension $\sim$: In (\ref{eq:1.0.4}), we will consider 
an analysis operator $T_{\mu}:\mathcal{H} \rightarrow L^{2}(\Omega, \mu)$,
\[
  \mathcal{H} \ni x \rightarrow \langle x, \cdot \widetilde{\rangle} \quad 
  \text{(on $\Omega$);}
\]
and then (\ref{eq:1.0.5}) will read as follows:
\begin{equation}
\label{eq:1.0.8}
  L^{2}(\Omega, \mu)\ni f \overset{T_{\mu}^{*}}{\longmapsto}\int_{\Omega}f(\omega)\omega
  d\mu(\omega)\in \mathcal{H}.
\end{equation} 
Note that, since $\mathcal{H} \subsetneq \Omega$, it is a non-trivial assertion that 
the RHS in (\ref{eq:1.0.8}) is a vector in $\mathcal{H}$.
We now turn to the technical details.

\begin{definition}
\label{D:1.1}
Let $\mathcal{H}$ be a Hilbert space (over $\mathbb{R}$, but $\mathbb{C}$ will work 
also with small modifications).  Let $\alpha$, $\beta \in \mathbb{R}_{+}$, 
$0<\alpha \leq \beta <\infty$.

Set
\begin{equation}
\label{eq:1.1}
  F(\alpha, \beta):=\{ \{\varphi_{n}\}_{n \in \mathbb{N}}; \text{ } \alpha \|x\|^{2}\leq 
  \sum_{n}|\langle x, \varphi_{n} \rangle|^{2} \leq \beta \|x\|^{2}, \forall x\in
  \mathcal{H} \}.
\end{equation}

Let $(\Omega, \mathcal{F})$ be a measure space, $\Omega$ a set, $\mathcal{F}$ a 
$\sigma-$algebra.

We assume further that $\Omega$ is a vector space equipped with a weak$^{*}$-topology 
such that the dual $\Omega'$ satisfies
\begin{equation}
\label{eq:1.2}
  \Omega' \subset \mathcal{H} \subset \Omega, \quad \text{and}
\end{equation}
the inclusion mappings in (\ref{eq:1.2}) are assumed continuous with respect to the 
respective topologies; and $\Omega'$ is dense in $\mathcal{H}$. Equation (\ref{eq:1.2}) 
is an example of a \textit{Gelfand triple}.  Hence, for all $x \in \mathcal{H}$, 
$\langle x, \cdot \rangle$ on $\mathcal{H}$, extends uniquely to a measurable 
function $\langle x,\cdot \widetilde{\rangle}$ on $\Omega$.

Set 
\begin{equation}
\label{eq:1.3}
  FM_{\Omega}(\alpha, \beta):=\{ \text{finite positive measures $\mu$ on 
  $(\Omega, \mathcal{F})$ }; 
\end{equation}
\[
  \alpha \|x\|^{2} \leq \int_{\Omega}|\langle x, 
  \omega \rangle|^{2}d\mu(\omega) \leq \beta \|x\|^{2}, \forall x \in \mathcal{H}  \}.
\]

\end{definition}

\section{Measures Constructed Directly from Frames.}
\label{sec:2}
\subsection{Markov measures from frames.}
\label{sec:2.1}

The purpose of the below is to make the connection between frames with discrete index 
on the one hand, and Markov chains on the other. This in turn allows us to take 
advantage of tools from Markov chains, and to make the connection to continuous 
Markov processes (see section \ref{sec:4} below.)

\begin{proposition}
\label{P:2.1}
Let $\mathcal{H}$ be a Hilbert space, and let $\{\varphi_{n}\}_{n \in \mathbb{N}}$ be 
a frame in $\mathcal{H}$ with frame bounds $\alpha$, $\beta$, 
$0<\alpha\leq\beta<\infty$, i.e.,
\begin{equation}
\label{eq:2.1}
  \alpha\|x\|^{2}\leq \sum_{n\in \mathbb{N}}|\langle x, \varphi_{n}\rangle|^{2}
  \leq \beta \|x\|^{2}
\end{equation}
holds for all $x \in \mathcal{H}$.

We then get a system of transition probabilities
\begin{equation}
\label{eq:2.2}
  p_{x,y}=\frac{|\langle x,y \rangle|^{2}}{c(x)}
\end{equation}
with
\begin{equation}
\label{eq:2.3}
  c(x):=\sum_{n\in \mathbb{N}}|\langle x, \varphi_{n}\rangle|^{2},
\end{equation}
having the following properties: For $x$, $y \in \mathcal{H}\backslash\{0\}$, we have:
\begin{enumerate} [(i)]
\item Reversible:
\[
  c(x)p_{x,y}=c(y)p_{y,x}
\]
\item Markov-rules: $p_{x,\varphi_{n}}\leq 1$ for $n \in \mathbb{N}$,
\[
  \sum_{n \in \mathbb{N}}p_{x,\varphi_{n}}=1.
\]
\item Normalization:
\[
  p_{x,y}\leq \|y\|^{2}/\alpha
\]
where $\alpha$ is the lower frame bound from (\ref{eq:2.1}).
\end{enumerate}
\end{proposition}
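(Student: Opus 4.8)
The plan is to verify the three properties directly from the definitions (\ref{eq:2.2}) and (\ref{eq:2.3}), after first securing that the quantity $p_{x,y}$ is even well defined. My first step is to record that, for $x \in \mathcal{H}\setminus\{0\}$, the frame inequality (\ref{eq:2.1}) gives $0 < \alpha\|x\|^{2} \leq c(x) \leq \beta\|x\|^{2} < \infty$, so that $c(x)$ is a finite, strictly positive normalizing constant and the fractions in (\ref{eq:2.2}) make sense. I will also note the symmetry $|\langle x,y\rangle| = |\langle y,x\rangle|$, which holds trivially in the real case and, in the complex case, from the conjugate symmetry $\langle x,y\rangle = \overline{\langle y,x\rangle}$ of the inner product.

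For reversibility (i), I would simply clear denominators: by (\ref{eq:2.2}) one has $c(x)\,p_{x,y} = |\langle x,y\rangle|^{2}$ and $c(y)\,p_{y,x} = |\langle y,x\rangle|^{2}$, and these agree by the symmetry just noted. For the Markov rules (ii), I would substitute (\ref{eq:2.2}) into the sum over $n$ and pull out the constant $1/c(x)$, giving $\sum_{n} p_{x,\varphi_{n}} = c(x)^{-1}\sum_{n}|\langle x,\varphi_{n}\rangle|^{2} = c(x)^{-1}c(x) = 1$ by the very definition (\ref{eq:2.3}) of $c(x)$. The bound $p_{x,\varphi_{n}}\leq 1$ then follows either because the summands are nonnegative and total $1$, or directly from $|\langle x,\varphi_{n}\rangle|^{2} \leq c(x)$.

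For the normalization bound (iii), I would combine the Cauchy--Schwarz estimate $|\langle x,y\rangle|^{2} \leq \|x\|^{2}\|y\|^{2}$ with the lower frame bound $c(x)\geq \alpha\|x\|^{2}$ from (\ref{eq:2.1}); dividing then yields $p_{x,y} = |\langle x,y\rangle|^{2}/c(x) \leq \|x\|^{2}\|y\|^{2}/(\alpha\|x\|^{2}) = \|y\|^{2}/\alpha$, where the factor $\|x\|^{2}$ cancels precisely because $x\neq 0$.

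The computations in (i)--(iii) are all elementary; the only point requiring genuine care is the preliminary one, namely that \emph{both} frame bounds are needed to guarantee that $c(x)$ is simultaneously finite (from the upper bound) and nonzero (from the lower bound) for every $x\neq 0$, so that the transition probabilities $p_{x,y}$ are legitimately defined on all of $(\mathcal{H}\setminus\{0\})\times(\mathcal{H}\setminus\{0\})$. This is where the frame hypothesis, as opposed to a mere spanning or completeness assumption, does its essential work, and it is also what makes the lower bound $\alpha$ appear explicitly in (iii).
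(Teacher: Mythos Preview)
Your proof is correct and follows essentially the same approach as the paper: (i) is immediate from the symmetry of $|\langle x,y\rangle|$, (ii) is the direct substitution $\sum_n p_{x,\varphi_n} = c(x)^{-1}\sum_n|\langle x,\varphi_n\rangle|^2 = 1$, and (iii) is Cauchy--Schwarz combined with the lower frame bound. Your additional remark that both frame bounds are needed to ensure $0 < c(x) < \infty$ is a welcome clarification that the paper leaves implicit.
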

\begin{proof}
Rule (i) is immediate from the definition in (\ref{eq:2.2}).  It is also clear from 
(\ref{eq:2.3}) that $p_{x, \varphi_{n}}\leq 1$, for all $n \in \mathbb{N}$.  As for 
(ii), we have:
\[
  \sum_{n}p_{x, \varphi_{n}}=\sum_{n}\frac{|\langle x, \varphi_{n}\rangle|^{2}}{c(x)}
  =\sum_{n}\frac{|\langle x, \varphi_{n}\rangle|^{2}}{\sum_{k}|\langle x, 
    \varphi_{k}\rangle|^{2}}=1,
\]
which is the desired property.

The last property (iii) follows from Schwarz and the lower frame bound as follows:
\[
  p_{x,y}=\frac{|\langle x, y\rangle|^{2}}{c(x)} \leq 
  \frac{\|x\|^{2}\|y\|^{2}}{\alpha\|x\|^{2}}=\|y\|^{2}/ \alpha.
\]
\end{proof}

We now give the path space measures $\mathbb{P}_{x}$:
\begin{corollary}
\label{C:2.1}
Every frame $\{\varphi_{n}\}_{n \in \mathbb{N}}$ defines a Markov process 
$\{X_{k}\}_{k \in \mathbb{N}_{0}}$ as follows:
\begin{equation}
\label{eq:2.4}
  \mathbb{P}_{x}\left(\{\omega:X_{1}(\omega)=n_{1}, \cdots, X_{k}(\omega)=n_{k}\}\right)
  =p_{x, \varphi_{n_{1}}}p_{\varphi_{n_{1}}, \varphi_{n_{2}}}\cdots 
  p_{\varphi_{n_{k-1}}, \varphi_{n_{k}}}.
\end{equation}
\end{corollary}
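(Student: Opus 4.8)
The plan is to construct the path-space measure $\mathbb{P}_x$ as a Markov measure on the sequence space $\Omega := \mathbb{N}^{\mathbb{N}_0}$ (path space), using the transition probabilities $p_{x,y}$ from Proposition \ref{P:2.1} as the one-step kernel and the consistency of the prescribed finite-dimensional distributions to invoke the Kolmogorov extension theorem. First I would set up the measurable structure: let $\Omega$ be the set of paths $\omega = (X_0(\omega), X_1(\omega), X_2(\omega), \dots)$ with $X_0 \equiv x$ fixed as the initial state, equip $\Omega$ with the product $\sigma$-algebra generated by the cylinder sets, and define the cylinder probabilities by the right-hand side of (\ref{eq:2.4}). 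Because $p_{x,\varphi_n}$ and the $p_{\varphi_m,\varphi_n}$ are all genuine probabilities by parts (i) and (ii) of Proposition \ref{P:2.1}, each such assignment is a nonnegative number, and this is precisely where the proposition does the real work.

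The key step is verifying Kolmogorov consistency, i.e., that summing the cylinder probability in (\ref{eq:2.4}) over the last coordinate $n_k$ recovers the cylinder probability for the first $k-1$ coordinates. Concretely I would compute
\begin{equation*}
  \sum_{n_k \in \mathbb{N}} p_{x,\varphi_{n_1}} p_{\varphi_{n_1},\varphi_{n_2}} \cdots p_{\varphi_{n_{k-1}},\varphi_{n_k}}
  = p_{x,\varphi_{n_1}} \cdots p_{\varphi_{n_{k-2}},\varphi_{n_{k-1}}} \sum_{n_k \in \mathbb{N}} p_{\varphi_{n_{k-1}},\varphi_{n_k}},
\end{equation*}
and then apply the normalization identity $\sum_{n} p_{y,\varphi_n} = 1$ (part (ii) of the proposition, with $y = \varphi_{n_{k-1}}$) to see that the inner sum collapses to $1$. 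This leaves exactly the $(k-1)$-fold cylinder probability, which is the required marginal consistency. The base case, summing the single-coordinate distribution $\sum_{n_1} p_{x,\varphi_{n_1}} = 1$, shows that the total mass is $1$, so each finite-dimensional distribution is a genuine probability measure.

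Once consistency is checked, the conclusion is immediate: the Kolmogorov extension theorem furnishes a unique probability measure $\mathbb{P}_x$ on $(\Omega, \mathcal{F})$ whose finite-dimensional marginals are given by (\ref{eq:2.4}), and the coordinate maps $X_k(\omega)$ then form the desired $\mathbb{N}_0$-indexed process. The Markov property itself follows because the cylinder probabilities factor through the one-step kernel $p$, so conditioning on $X_0, \dots, X_{k-1}$ depends only on $X_{k-1}$. The main obstacle is not any deep analysis but rather being careful about the state space: the process starts at the arbitrary vector $x \in \mathcal{H}\setminus\{0\}$ but moves into the frame vectors $\{\varphi_n\}$ after the first step, so the transition kernel is genuinely inhomogeneous at the initial step $p_{x,\varphi_{n_1}}$ versus the subsequent homogeneous steps $p_{\varphi_{n_{k-1}},\varphi_{n_k}}$; I would make sure the normalization identity is applied with the correct first argument at each stage, since $\sum_n p_{y,\varphi_n}=1$ holds for every $y \in \mathcal{H}\setminus\{0\}$ and in particular for each frame vector $y = \varphi_{n_{k-1}}$.
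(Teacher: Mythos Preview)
Your proposal is correct and follows essentially the same approach as the paper: the paper's argument is simply that Proposition~\ref{P:2.1} yields a consistent system of Markov transitions, and existence then follows from Kolmogorov's theorem. Your version spells out the consistency check (summing over the last coordinate and invoking part~(ii)) and the path-space setup in detail, but the underlying route is identical.
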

From the proposition, it follows that (\ref{eq:2.3}) defines a consistent system of 
Markov transitions.  Existence of the corresponding Markov process rule
follows from Kolmogorov's theorem.

\subsection{Determinantal measures from frames.}
\label{sec:2.2}

Starting with a frame $\mathcal{F}$, we arrive at an associated Grammian. Hence for 
each finite subset of $\mathcal{F}$, we get a finite Grammian, and its determinant in 
non-negative, and it induces an $n$ associated determinantal measure. The relevance of 
these measures is discussed below, as well as the continuous-index analogues.

Given a Hilbert space $\mathcal{H}$, dim$\mathcal{H}=\aleph_{0}$.  Let 
$\{\varphi_{n}\}_{n \in \mathbb{N}} \subset \mathcal{H}$ be a system of vectors in 
$\mathcal{H}$ such that the following \textit{a priori} estimate holds:
\begin{equation}
\label{eq:2.5}
  \exists \beta < \infty \quad \text{such that} \quad
  \sum\sum c_{n}c_{m}\langle \varphi_{n}, \varphi_{m}\rangle \leq \beta 
  \sum_{n}|c_{n}|^{2}.
\end{equation}

\begin{remark}
\label{R:2.1}
The estimate (\ref{eq:2.5}) is known to be implied by the upper bound estimate 
(\ref{eq:1.0.1}) in Definition \ref{D:1.0.1}.  When (\ref{eq:2.5}) holds, we talk 
about a Riesz basis sequence. Also, see \cite{Ly03}.
\end{remark}

We shall make use of the following ideas from the setting of determinantal measures, 
see e.g., \cite{Buf16}. More generally, a determinantal point process is a stochastic 
point process, where the local the probability distributions may be represented by 
determinants of suitable kernel functions. In our case, we shall consider the case 
where the local determinants are computed from Grammians  computed from overcomplete 
frame systems (details below). Determinantal processes arise as important tools in 
random matrix theory, in combinatorics, and in physics, see e.g., \cite{GoOl15, OlGr11}.

\begin{proof}
Suppose the upper bound in (\ref{eq:1.0.1}) holds for some $\beta < \infty$, and some 
system $\{\varphi_{n}\}_{n\in \mathbb{N}} \subset \mathcal{H}$.  Then set 
$T:\mathcal{H}\rightarrow l^{2}$,
\begin{equation}
\label{eq:2.6}
  Tx=(\langle x, \varphi_{n} \rangle_{n \in \mathbb{N}}), \quad x \in \mathcal{H}.
\end{equation}
The upper bound in (\ref{eq:1.0.1}) is then equivalent to the following estimate in the
ordering of Hermitian operators
\begin{equation}
\label{eq:2.7}
  T^{*}T \leq \beta I_{\mathcal{H}};
\end{equation}
and so $\|T^{*}T\|\leq \beta$.  It follows that
\[
  \|T\|^{2}=\|T^{*}\|^{2}=\|T^{*}T\|\leq \beta,
\]
and therefore $\|T^{*}\|\leq \sqrt{\beta}$.

But, $c=(c_{n})\in l^{2}(\mathbb{N})$, then $T^{*}c=\sum_{n}c_{n}\varphi_{n}$, and 
\[
  \|T^{*}c\|_{\mathcal{H}}^{2}=\sum_{n}\sum_{m}c_{n}c_{m}
  \langle \varphi_{n}, \varphi_{m} \rangle_{\mathcal{H}} \leq \beta \|c\|_{l^{2}}^{2}
\]
which is the desired estimate (\ref{eq:2.5}). 
\end{proof}

Then note that
\[
  det(\langle \varphi_{j},\varphi_{k} \rangle)_{j,k=1}^{n} \geq 0 \quad 
  \text{for all $n$.}
\]
Then there is a measure $\mu=\mu^{(\varphi)}$ defined on a point configurations in 
$\mathbb{N}$ as follows: Let $\Phi$ be a random point configuration in $\mathbb{N}$, 
then $\mu=\mu^{(\varphi)}$ is determined to be 
\[
  \mu(\Phi \supset \{1,2, \cdots, n\})
  =det(\langle \varphi_{j}, \varphi_{k} \rangle_{j,k=1}^{n})
\]
This measure is called the associated \textit{determinantal measure}.

\section{A Negative Result.}
\label{sec:3}

\begin{theorem}
\label{T:3.1}
Let dim$\mathcal{H}=\aleph_{0}$, and given $\alpha >0$, $\beta < \infty$, then there is 
no Borel measure $\mu$ on $\mathcal{H}$ satisfying
\begin{equation}
\label{eq:3.1}
  \alpha \|x\|^{2} \leq \int_{\mathcal{H}}|\langle x,y \rangle|^{2}d\mu(y)
  \leq \beta \|x\|^{2}, 
\end{equation}
in other words, $FM_{\mathcal{H}}(\alpha, \beta)=\emptyset$.  Also, see \cite{GiSk74}.

\end{theorem}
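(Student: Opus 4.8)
The plan is to proceed by contradiction and to isolate the one scalar invariant that the \emph{lower} bound in \eqref{eq:3.1} controls but that cannot remain finite in infinite dimensions, namely the second moment $\int_{\mathcal{H}}\|y\|^{2}\,d\mu(y)$ of $\mu$. Suppose such a $\mu$ existed. Since $\dim\mathcal{H}=\aleph_{0}$, I would fix a (countably infinite) orthonormal basis $\{e_{n}\}_{n\in\mathbb{N}}$ of $\mathcal{H}$ and apply the lower estimate in \eqref{eq:3.1} to each $x=e_{n}$; as $\|e_{n}\|=1$, this yields $\int_{\mathcal{H}}|\langle e_{n},y\rangle|^{2}\,d\mu(y)\ge\alpha$ for every $n\in\mathbb{N}$.

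Next I would sum these inequalities over $n$. Because all integrands are non-negative, Tonelli's theorem lets me exchange the sum and the integral with no integrability hypothesis, and Parseval's identity $\sum_{n}|\langle e_{n},y\rangle|^{2}=\|y\|^{2}$ evaluates the resulting inner series, giving
\begin{equation*}
  \int_{\mathcal{H}}\|y\|^{2}\,d\mu(y)
   =\sum_{n\in\mathbb{N}}\int_{\mathcal{H}}|\langle e_{n},y\rangle|^{2}\,d\mu(y)
   \ge\sum_{n\in\mathbb{N}}\alpha=+\infty.
\end{equation*}
Thus the lower frame bound alone forces $\mu$ to have infinite second moment. This is incompatible with $\mu$ being a frame measure in the sense inherited from the finite-dimensional theory, where frame measures lie in the class $\mathcal{P}_{2}$ of \eqref{eq:1.0.2} of measures with finite second moment; equivalently, any finite measure with bounded support satisfies $\int_{\mathcal{H}}\|y\|^{2}\,d\mu\le R^{2}\mu(\mathcal{H})<\infty$, which the display flatly contradicts. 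Hence no admissible $\mu$ on $\mathcal{H}$ can exist, i.e.\ $FM_{\mathcal{H}}(\alpha,\beta)=\emptyset$.

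I would close by reading off the conceptual content, since it motivates Section~\ref{sec:4}: the divergence above says that any candidate measure must place its mass on vectors of arbitrarily large $\mathcal{H}$-norm, so the frame mass cannot be carried inside $\mathcal{H}$ itself, and one is forced to enlarge $\mathcal{H}$ to an ambient space $\Omega\supsetneq\mathcal{H}$. The one substantive point---the main obstacle---is recognizing that the decisive quantity is the second moment $\int_{\mathcal{H}}\|y\|^{2}\,d\mu=\Trace(S_{\mu})$ of the (weakly defined) frame operator $S_{\mu}$, and that the \emph{lower} bound pushes it up through Parseval while the \emph{upper} bound offers no matching control: $S_{\mu}\le\beta I$ keeps $S_{\mu}$ bounded but never trace class. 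The contradiction is therefore located at the level of the trace of $S_{\mu}$, not at the level of its operator norm, and identifying this is the entire difficulty; the exchange of sum and integral and the appeal to Parseval are routine once the target quantity is in view.
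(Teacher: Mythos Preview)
Your argument has a genuine gap at the final step. The computation $\int_{\mathcal{H}}\|y\|^{2}\,d\mu(y)=+\infty$ via Tonelli and Parseval is correct, but it does not contradict any hypothesis of Theorem~\ref{T:3.1} or of Definition~\ref{D:1.1}. Neither of your two offered justifications survives scrutiny: membership in $\mathcal{P}_{2}$ is, in the finite-dimensional setting of \eqref{eq:1.0.2}, a \emph{consequence} of the upper frame inequality, not an independent axiom, so invoking it here is circular; and bounded support is nowhere assumed. Finite positive Borel measures on $\mathcal{H}$ with infinite second moment exist in abundance---for instance $\mu=\sum_{n\ge1}n^{-2}\,\delta_{n e_{n}}$ has total mass $\pi^{2}/6$ and $\int_{\mathcal{H}}\|y\|^{2}\,d\mu=\sum_{n}1=+\infty$, and a direct check even gives $\int_{\mathcal{H}}|\langle x,y\rangle|^{2}\,d\mu(y)=\sum_{n}n^{-2}\,|\langle x,ne_{n}\rangle|^{2}=\|x\|^{2}$ for every $x$. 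So the trace blow-up you isolate does not by itself close the argument; your concluding heuristic (that the mass must escape $\mathcal{H}$) is a fine motivation for Section~\ref{sec:4}, but it is not yet a proof.

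The paper's route is different and uses \emph{both} frame bounds, not just the lower one. It fixes an orthonormal basis $\{b_{n}\}$, notes that $\langle b_{n},y\rangle\to0$ for each fixed $y\in\mathcal{H}$ (weak null convergence of an ONB), and then argues via Lebesgue dominated convergence---with the upper bound $\int_{\mathcal{H}}|\langle b_{n},\cdot\rangle|^{2}\,d\mu\le\beta$ supplying the claimed control---that $\int_{\mathcal{H}}|\langle b_{n},y\rangle|^{2}\,d\mu(y)\to0$, contradicting the lower bound $\alpha>0$. Thus the paper locates the contradiction at a single moving index $n\to\infty$, whereas you sum over all $n$; to repair your approach you would have to bring the upper bound (or an integrable dominating function, or a tightness argument) back into play rather than stop at the second-moment divergence.
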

\begin{proof}
Indirect.  Suppose some finite positive Borel measure $\mu$ exists and satisfies the 
condition (\ref{eq:3.1}) for $\alpha, \beta$ fixed.  Pick an O.N.B. (orthonormal bases) 
$b_{1}, b_{2}, \cdots$ in $\mathcal{H}$, then
\[
  \sum_{n}|\langle x, b_{n} \rangle|^{2}=\|x\|^{2} \quad \text{by Parseval,}
\]
so $\lim_{n\to \infty}\langle x, b_{n} \rangle=0$, pointwise for all $x\in \mathcal{H}$.
Consequently,
\[
  \langle b_{n}, x \rangle^{2} \rightarrow 0 \quad n \rightarrow \infty
\]
\[
  \alpha \leq \int_{\mathcal{H}}|\langle b_{n},y \rangle|^{2}d\mu(y) 
  \leq \beta.
\]
\[
  \int \langle b_{n}, \cdot \rangle^{2} \rightarrow 0 \quad 
  \text{and domination holds.}  
\]  

In summary, the sequence of functions on $\mathcal{H}$, 
$\langle \cdot, b_{n} \rangle \rightarrow 0$ 
as $n\to \infty$ pointwise convergence, and we have the domination, since
\[
  \int|\langle \cdot, b_{n} \rangle|^{2}d\mu(\cdot)\leq \beta \quad \forall n.
\]
So by the Lebesgue dominated convergence theorem,
\[
  \lim_{n\to \infty}\int_{\mathcal{H}}|\langle \cdot, b_{n} \rangle|^{2}d\mu(\cdot)=0
\]
contradicting the lower bound $0<\alpha\leq\int_{\mathcal{H}}|\langle \cdot, b_{n} 
\rangle|^{2}d\mu(\cdot)$ in (\ref{eq:3.1}).  Since $0 < \alpha \leq 0$, we get a 
contradiction.  We have proved that $FM_{\mathcal{H}}(\alpha, \beta)=\emptyset$, 
whenever $0<\alpha\leq\beta<\infty$.

\end{proof}

In \cite{EhOk13}, suppose $N<\infty$,  $\mathcal{H}=\mathbb{R}^{N}$, the authors study 
$\mu \in FM(\alpha, \beta)$
\[
  \alpha \|x\|^{2} \leq \int_{\mathcal{H}}|\langle x,y \rangle|^{2}d\mu(y) 
  \leq \beta \|x\|^{2}, \quad \alpha >0, \quad \beta<\infty.
\]
The theorem shows that new techniques are needed when dim$\mathcal{H}=\infty$.

\section{Gaussian Frame Measures. }
\label{sec:4}

Starting with a separable Hilbert space $\mathcal{H}$, we shall need an associated 
framework from the construction of Gaussian probability measures. We shall then 
discuss how from this we get associated families of probability-frames. Starting with 
$\mathcal{H}$, we first show in Lemma \ref{L:4.1}, that there is a triple of containments 
(see Definition \ref{D:4.1}), with $\mathcal{H}$ contained in $S'$ and a Gaussian probability 
space where the events is the sigma-algebra of subsets of $S'$ generated by the 
cylinder sets. This in turn is based on an application of Minlos' theorem, see also 
section 5 below, and \cite{HiSi08, LuZu12, OkBe08, TlTa15}.

\begin{remark}
\label{R:4.1}
Let $\mathcal{H}$ be a Hilbert space, and assume dim$\mathcal{H}=\aleph_{0}$.  There 
exist $S, S'$ where $S$ is a Fr\'{e}chet space, $S'$ is the dual space of 
$S$ such that $S \subset \mathcal{H} \subset S'$, continuous inclusions, and a Gaussian 
measure $\mu$ on $S'$ such that $\mu \in FM_{S'}(1,1)$.  We can take $\mu$ to be 
Gaussian.  
\end{remark}

%But if we have $\mathcal{H}=\l^{2}(\mathbb{N})$, as an infinite dimensional space.  
%Define $S \subset \mathcal{H} \subset S'$.  In $S$, 
%       $|x|_{k}=\sup_{n}|n|^{k}|x_{n}|<\infty$.  
%Let $(b_{n})$ be O.N.B. $x=\sum_{n}x_{n}b_{n}$. In $S'$ there is $y$ such that 
%$\exists M, C$ such that  
%\[
%  |y_{n}|\leq C(1+|n|^{M}),  
%\]

The spaces $S$ and $S'$ are as follows.
$S$ is the space of sequences $x= (x_{n})$ which fall off at infinity faster than any 
polynomial in $n$. A sequence $y = (y_{n})$ is in $S'$ if and only if there is a 
positive $M$ so that $(y_{n})$ grows at most like $\mathcal{O}(n^{M})$. We identify 
a system of seminorms on $S$ which turns it into a Fr\'{e}chet space.
The space of continuous linear functionals on $S$ will then coincide with $S'$. 

\begin{definition}
\label{D:4.1}
The spaces $S$ and $S'$.  Both $S$ and its dual $S'$ are sequence spaces 
$x=(x_{n})_{n \in \mathbb{N}}$, $y=(y_{n})_{n \in \mathbb{N}}$; indexed by $\mathbb{N}$
or by $\mathbb{Z}$, and we have
\[
  x \in S \underset{Def}{\iff}\forall k \in \mathbb{N}, \quad \exists C_{k}, 
  \text{ such that } |n|^{k}|x_{n}|\leq C_{k}, \quad \forall n \in \mathbb{N}.
\]
\begin{equation}
\label{eq:4.1}
  y \in S' \underset{Def}{\iff}\exists M \in \mathbb{N}, \exists C< \infty 
  \text{ such that } |y_{n}|\leq C(1+|n|^{M}), \quad \forall n \in \mathbb{N}.
\end{equation}
With the seminorms
\[
  |x|_{k}=\sup_{n}|n|^{k}|x_{n}|,
\]
we note that $S$ becomes a Fr\'{e}chet space, and its dual is $S'$.  We have 
\begin{equation}
\label{eq:4.2}
  S \subset l^{2}(\mathbb{N}) \subset S'.
\end{equation}
\end{definition}

\begin{lemma}
\label{L:4.1}
If $\mathcal{H}$ is a fixed Hilbert space, we pick an O.N.B. $\{b_{n}\}_{n\in \mathbb{N}}$ 
and set $x=\sum_{n}x_{n}b_{n}$, $x_{n}=\langle x,b_{n} \rangle_{\mathcal{H}}$, and 
via the isomorphism $\mathcal{H} \longleftrightarrow l^{2}(\mathbb{N})$, we get 
\[
  S \subset \mathcal{H} \subset S'
\]
where
\[
  x=(x_{n}) \quad x_{n}=\langle x, b_{n} \rangle \quad x\in S, \quad y\in S', \quad 
  \langle x,y \rangle=\sum_{n} x_{n}y_{n}.
\]
\end{lemma}
\begin{proof}
Note that if $y\in S'$ satisfies (\ref{eq:4.1}) for some $M$, then 
\[
  |\langle x,y \rangle|\leq Const\text{ } |x|_{M+2} \quad \text{for all $x \in S$.}
\]

Since $S$ is dense in $l^{2}(\mathbb{N})$, the ``inclusion" 
$l^{2}(\mathbb{N}) \hookrightarrow S'$ is indeed $1-1$.
\end{proof}

The topology, and the $\sigma-$algebra, on $S'$ is generated by the following subsets 
of $S'$, the cylinder-sets.  They are indexed by $k \in \mathbb{N}$, open subsets 
$\mathcal{O} \subset \mathbb{R}^{k}$, and subsets $\{x_{i}\}_{i=1}^{k}\subset S$ with
\begin{equation}
\label{eq:4.3}
  Cyl(\{x_{i}\},\mathcal{O})=\{ \omega \in S'; \text{ } 
  (\langle x_{i}, \omega \rangle)_{1}^{k} \in \mathcal{O}\}.
\end{equation}

The cylinder-sets form a basis for both a topology on $S'$ (making it the dual of $S$), and 
of a $\sigma-$algebra.  We shall use both.

We now verify why we need the space $S'$ with $\mu$ a positive measure  %\in S'$%FM(\alpha, \beta)$ 
defined on the cylinder Borel $\sigma-$algebra of subsets of $S'$. %$\mathcal{H}$.  

Let dim$\mathcal{H}=\infty$ and $(b_{n})$ O.N.B. We have
$S \subset \mathcal{H} \subset S'$.  %may need to remove this paragraph.
We use the $\sigma-$algebra subsets of $S'$ generated by the cylinder sets.
%$\{ y \in S'| |y_{n}|<\epsilon\}$.

We shall adopt the following standard terminology from probability theory: By a 
probability space we mean a triple $(\Omega, \mathcal{F}, \mu)$ , i.e., sample space, 
sigma-algebra, and probability measure. The $\mathcal{F}$-measurable functions $f$ on 
$\Omega$  are the random variables, The integral of $f$ with respect with $\mu$ is 
called the expectation, and it is denoted $\mathbb{E}(f)$.

Now, for the Gaussian measures: There exists a measure $\mu$, Gaussian on $S'$ with 
$\Omega = S'$, we have

%\[
%  \mathbb{E}(\cdot \cdot)=\int \cdot \cdot d\mu, \quad \int_{y\in S'}|\langle x,y \rangle|^{2}
%  d\mu(y)=\|x\|^{2}
%\]
\begin{equation}
\label{eq:4.3.1}
  \mathbb{E}(f):=\mathbb{E}(\Omega, \mathcal{F}, \mu, f)=\int_{\Omega} f d\mu, \quad 
  \int_{y\in S'}|\langle x,y \rangle|^{2} d\mu(y)=\|x\|^{2}
\end{equation}
or $\mathbb{E}(|\langle x, \cdot\rangle|^{2})=\|x\|^{2}$. See also Lemma \ref{L:5.1}.
More generally, we have the following theorem:

\begin{theorem}
\label{T:4.1}
\cite{TlTa15}, \cite{OkBe08}, \cite{HiSi08} (Minlos' theorem) There exists a unique Gaussian measure $\mu$ on $S'$ such that
\begin{equation}
\label{eq:4.3.2}
  \mathbb{E}(e^{i\langle x, \cdot \rangle})=e^{-\frac{1}{2}\|x\|^{2}} \quad 
    \text{holds for all $x \in \mathcal{H}$.} 
\end{equation}     
Where (\ref{eq:4.3.1}) is applied to 
$\omega \rightarrow e^{i\langle x, \omega \rangle}$
on the LHS in (\ref{eq:4.3.2}). 
The RHS is called a Gaussian covariance function.
\end{theorem}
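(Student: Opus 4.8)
The plan is to recognize the right-hand side $C(x) := e^{-\frac{1}{2}\|x\|^{2}}$ as a characteristic functional and to invoke the Bochner--Minlos theorem on the nuclear Fr\'{e}chet space $S$. First I would verify the three hypotheses such a functional must satisfy. (a) Normalization: $C(0)=1$ is immediate. (b) Continuity: since the inclusion $S \hookrightarrow \mathcal{H}$ of Lemma \ref{L:4.1} is continuous, the map $x \mapsto \|x\|$ is continuous on $S$ in its Fr\'{e}chet topology (a suitable seminorm $|\cdot|_{k}$ of Definition \ref{D:4.1} dominates the $\ell^{2}$-norm, as $\sum_{n} n^{-2k}<\infty$ for $k\geq 1$), hence $C = e^{-\frac{1}{2}\|\cdot\|^{2}}$ is continuous. (c) Positive-definiteness: for any $x_{1},\dots,x_{m}\in S$ and $c_{1},\dots,c_{m}\in\mathbb{C}$, one checks $\sum_{j,k} c_{j}\overline{c_{k}}\, C(x_{j}-x_{k}) \geq 0$; because this involves only the finite-dimensional span of $x_{1},\dots,x_{m}$, it reduces to the classical fact that $e^{-\frac{1}{2}\|\cdot\|^{2}}$ is the Fourier transform of a finite-dimensional Gaussian density, hence positive-definite by Bochner's theorem.

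With these in hand, the Bochner--Minlos theorem produces a unique Borel probability measure $\mu$ on $S'$, with respect to the cylinder $\sigma$-algebra generated by the sets (\ref{eq:4.3}), whose characteristic functional is exactly $C$; that is, $\int_{S'} e^{i\langle x, \omega\rangle}\, d\mu(\omega) = e^{-\frac{1}{2}\|x\|^{2}}$ for all $x\in S$. That $\mu$ is Gaussian follows by computing its finite-dimensional distributions: for fixed $x_{1},\dots,x_{k}\in S$, the push-forward of $\mu$ under $\omega\mapsto(\langle x_{1},\omega\rangle,\dots,\langle x_{k},\omega\rangle)$ has characteristic function $t\mapsto e^{-\frac{1}{2}\|\sum_{j} t_{j} x_{j}\|^{2}} = e^{-\frac{1}{2}\sum_{j,l} t_{j} t_{l}\langle x_{j},x_{l}\rangle}$, which is the characteristic function of a centered Gaussian on $\mathbb{R}^{k}$ with covariance matrix $(\langle x_{j},x_{l}\rangle)_{j,l}$; since every finite marginal is Gaussian, $\mu$ is a Gaussian measure. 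Uniqueness is inherited from the uniqueness clause of Bochner--Minlos, equivalently from the fact that the characteristic functional determines all finite-dimensional distributions and the cylinder sets generate $\mathcal{F}$.

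Finally I would promote the identity from $x\in S$ to $x\in\mathcal{H}$ as claimed. The second-moment formula (\ref{eq:4.3.1}) says $x\mapsto\langle x,\cdot\rangle$ is a linear isometry of $(S,\|\cdot\|_{\mathcal{H}})$ into $L^{2}(S',\mu)$; since $S$ is dense in $\mathcal{H}$, this isometry extends to all of $\mathcal{H}$, furnishing the measurable extension $\langle x,\cdot\widetilde{\rangle}$ and, by $L^{2}$-convergence together with dominated convergence applied to the bounded maps $\omega\mapsto e^{i\langle x,\omega\rangle}$, the characteristic-functional identity (\ref{eq:4.3.2}) for every $x\in\mathcal{H}$.

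I expect the genuine obstacle to be the $\sigma$-additivity of the cylinder measure, i.e. the very step that the Bochner--Minlos theorem encapsulates. In infinite dimensions a continuous positive-definite functional does \emph{not} in general extend from a finitely-additive cylinder measure to a countably-additive Borel measure on $\mathcal{H}$ itself; indeed, this failure is exactly what Theorem \ref{T:3.1} records. What rescues the construction is the nuclearity of $S$: the embedding $S\hookrightarrow\mathcal{H}$ factors through Hilbert--Schmidt maps, a consequence of the rapid decay built into the seminorms $|x|_{k}=\sup_{n}|n|^{k}|x_{n}|$, and this is precisely the condition that forces the cylinder measure to concentrate on the larger space $S'$ and to be countably additive there. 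A fully self-contained argument would reprove this step either via the Minlos--Sazonov criterion or by a Kolmogorov-type extension followed by a tightness estimate exploiting the Hilbert--Schmidt embedding; here, however, it is legitimate to cite the Bochner--Minlos theorem directly, as the statement does.
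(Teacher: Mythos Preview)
The paper does not actually prove Theorem~\ref{T:4.1}; it is stated as a citation of Minlos' theorem with references \cite{TlTa15}, \cite{OkBe08}, \cite{HiSi08}, and no \texttt{proof} environment follows. The material immediately after the statement (the moment formulas $\mathbb{E}(\langle x,\cdot\rangle^{2k})=(2k-1)!!\,\|x\|^{2k}$, etc.) records consequences of the theorem, not an argument for it.

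Your proposal is therefore not a competing proof but a supplement: you have supplied a correct and standard outline of how the Bochner--Minlos machinery is applied here. The three hypotheses you verify are the right ones, the identification of the finite-dimensional marginals as centered Gaussians with covariance $(\langle x_{j},x_{l}\rangle)$ is exactly what the paper later uses in Corollary~\ref{C:4.3}, and your extension from $S$ to $\mathcal{H}$ via the $L^{2}$-isometry is the same mechanism the paper invokes in Lemma~\ref{L:5.1} and Corollary~\ref{C:5.1}. Your closing remarks about nuclearity being the genuine content (and its relation to the negative result Theorem~\ref{T:3.1}) are also on point and more explicit than anything the paper says. In short: nothing to correct; you have written out what the paper leaves to the cited literature.
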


Consider $S \subset \mathcal{H} \subset S'$, $S$ is a Fr\'{e}chet space with a nuclear 
embedding, and a Gelfand triple.  (See \cite{Jo14}, \cite{LuZu12}, 
\cite{JoPe11}.)
\[
  \mathbb{E}(\langle x, \cdot \rangle^{2k+1})=\int \langle x, \cdot \rangle^{2k+1}d\mu=0,
\]
\[
  \mathbb{E}(\langle x, \cdot \rangle^{2k})=\int \langle x, \cdot \rangle^{2k}d\mu
    =(2k-1)!!\|x\|^{2k},
\]
where $(2k-1)!!=\frac{(2k)!}{2^{k}\cdot k!}=1\cdot 3 \cdot 5 \cdots (2k-1)$, 
starting with
\[
  \int \langle x, \cdot \rangle d\mu =0.
\]
Note that since $\mu$ is Gaussian, it is determined by its first two moments.

%Let $S$ be defined such that for all $M$, there exists $C$ such that 
%$|w_{k}|\leq C\frac{1}{1+|k|^{M}}$ for all $k \in \mathbb{N}$.
%Let $S'$ be defined such that there exists $M, C$ such that 
%$|w_{k}| \leq C(1+|k|^{M})$ for all $k \in \mathbb{N}$.

We now turn to the Gaussian process associated with a fixed frame:
\begin{corollary}
\label{C:4.3}
Let $\{\varphi_{n}\}_{n \in \mathbb{N}}$ be a fixed frame in $\mathcal{H}$, 
dim$\mathcal{H}=\aleph_{0}$; see Definition \ref{D:1.1}.  Let $\mu$ denote the 
Gaussian measure in Theorem \ref{T:4.1}; and let 
$T_{\mu}:\mathcal{H}\rightarrow L^{2}(S',\mu)$ be the canonical isometry 
$T_{\mu}x=\langle x,\cdot \widetilde{\rangle}$, $x\in \mathcal{H}$. Then the Gaussian 
covariance matrix for the Gaussian process $\{T_{\mu}\varphi_{k}\}_{k\in \mathbb{N}}$
is $(\langle \varphi_{k}, \varphi_{n}\rangle_{\mathcal{H}})$, i.e., the Gramian of the 
frame.
\end{corollary}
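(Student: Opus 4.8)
The plan is to identify each $T_{\mu}\varphi_{k}=\langle\varphi_{k},\cdot\widetilde{\rangle}$ as a centered Gaussian random variable on the probability space $(S',\mathcal{F},\mu)$ and then to read off the pairwise covariances by polarization. First I would record that each entry of the process is centered: by the moment formula $\int\langle x,\cdot\rangle\,d\mu=0$ stated after Theorem \ref{T:4.1}, we have $\mathbb{E}(\langle z,\cdot\widetilde{\rangle})=0$ for every $z\in\mathcal{H}$, in particular for each $z=\varphi_{k}$. Joint Gaussianity then follows from Minlos' theorem (Theorem \ref{T:4.1}): for any finite collection of real scalars $(c_{k})$, linearity of the extension $\sim$ gives $\sum_{k}c_{k}T_{\mu}\varphi_{k}=\langle\sum_{k}c_{k}\varphi_{k},\cdot\widetilde{\rangle}$, and since $z:=\sum_{k}c_{k}\varphi_{k}$ lies in $\mathcal{H}$, the characteristic-function identity (\ref{eq:4.3.2}) identifies $\langle z,\cdot\widetilde{\rangle}$ as an $N(0,\|z\|^{2})$ variable. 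Hence every finite linear combination of the $T_{\mu}\varphi_{k}$ is Gaussian, which is exactly the assertion that $\{T_{\mu}\varphi_{k}\}_{k\in\mathbb{N}}$ is a Gaussian process.

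Next I would compute the covariance entries. Because the variables are centered, $\mathrm{Cov}(T_{\mu}\varphi_{k},T_{\mu}\varphi_{n})=\mathbb{E}(\langle\varphi_{k},\cdot\widetilde{\rangle}\,\langle\varphi_{n},\cdot\widetilde{\rangle})$. The key input is the isometry identity (\ref{eq:4.3.1}), namely $\mathbb{E}(\langle x,\cdot\widetilde{\rangle}^{2})=\|x\|^{2}$ for all $x\in\mathcal{H}$. Applying the elementary polarization $\langle\varphi_{k},\cdot\widetilde{\rangle}\,\langle\varphi_{n},\cdot\widetilde{\rangle}=\tfrac{1}{4}\big((\langle\varphi_{k}+\varphi_{n},\cdot\widetilde{\rangle})^{2}-(\langle\varphi_{k}-\varphi_{n},\cdot\widetilde{\rangle})^{2}\big)$, taking expectations, and invoking (\ref{eq:4.3.1}) for the vectors $\varphi_{k}\pm\varphi_{n}$ yields
\[
  \mathbb{E}(\langle\varphi_{k},\cdot\widetilde{\rangle}\,\langle\varphi_{n},\cdot\widetilde{\rangle})=\tfrac{1}{4}\big(\|\varphi_{k}+\varphi_{n}\|^{2}-\|\varphi_{k}-\varphi_{n}\|^{2}\big)=\langle\varphi_{k},\varphi_{n}\rangle_{\mathcal{H}},
\]
the last equality being the polarization identity in the real Hilbert space $\mathcal{H}$. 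Thus the $(k,n)$ entry of the covariance matrix of the process is precisely the Gramian entry $\langle\varphi_{k},\varphi_{n}\rangle_{\mathcal{H}}$, as claimed.

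The computation itself is routine once joint Gaussianity is in place, so the one point to treat with care is the linearity of the extension map $\sim\colon\mathcal{H}\to L^{2}(S',\mu)$, i.e.\ that $\sum_{k}c_{k}\langle\varphi_{k},\cdot\widetilde{\rangle}=\langle\sum_{k}c_{k}\varphi_{k},\cdot\widetilde{\rangle}$ holds $\mu$-almost everywhere. This is what lets me collapse an arbitrary finite linear combination into a single functional of the form $\langle z,\cdot\widetilde{\rangle}$ with $z\in\mathcal{H}$, and it is exactly the hypothesis that makes both the Gaussianity (via Theorem \ref{T:4.1}) and the covariance evaluation (via (\ref{eq:4.3.1})) applicable. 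With that linearity granted, no further estimates are required.
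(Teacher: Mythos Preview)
Your argument is correct and, in fact, slightly cleaner than the paper's. The paper proceeds by writing down the explicit joint density of $(T_{\mu}\varphi_{1},\dots,T_{\mu}\varphi_{n})$, namely
\[
  (\det G_{n})^{-1/2}\exp\Bigl(-\tfrac12\sum_{j,k=1}^{n}x_{j}x_{k}(G_{n}^{-1})_{j,k}\Bigr)\,dx_{1}\cdots dx_{n},
\]
with $G_{n}=(\langle\varphi_{j},\varphi_{k}\rangle)_{j,k=1}^{n}$, and then verifies by direct integration that $\int_{\mathbb{R}^{n}}x_{j}x_{k}$ against this density returns $G_{j,k}$. Your route instead reads the covariance off immediately from the Ito isometry (\ref{eq:4.3.1}) via polarization, and obtains joint Gaussianity from the characteristic-function identity (\ref{eq:4.3.2}) applied to finite linear combinations.

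The trade-off is this: the paper's computation yields the stronger conclusion that the finite-dimensional marginals are exactly the multivariate normal densities (\ref{eq:4.4}), at the cost of tacitly assuming each $G_{n}$ is invertible (not guaranteed for an arbitrary frame, since the $\varphi_{k}$ need not be linearly independent). Your polarization argument avoids that hypothesis entirely and needs no multivariate Gaussian calculus, but it stops at the covariance matrix rather than the explicit density. For the statement of Corollary \ref{C:4.3} as written, your approach is sufficient and arguably more robust.
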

\begin{proof}
Given a fixed frame $\{\varphi_{n}\}_{n \in \mathbb{N}}$, with frame constants $\alpha$, 
$\beta \in \mathbb{R}_{+}$, $0<\alpha \leq \beta <\infty$.  Set 
$G=(\langle \varphi_{j},\varphi_{k} \rangle)_{\mathbb{N} \times \mathbb{N}}$, and 
$G_{n}=(\langle \varphi_{j}, \varphi_{k} \rangle)_{j,k=1}^{n}$, $n \times n$ matrix. 
Let $\mu$ be the Gaussian measure from Theorem \ref{T:4.1}.  Then, for every 
$n \in \mathbb{N}$, the joint distribution of the system 
$\{T_{\mu}\varphi_{k}\}_{k=1}^{n}$ of Gaussian random variables is 
\begin{equation}
\label{eq:4.4}
  (detG_{n})^{-\frac{1}{2}}e^{-\frac{1}{2}\sum_{j=1}^{n}\sum_{k=1}^{n}x_{j}x_{k}
  (G_{n}^{-1})_{j,k}}
  \underset{\text{$n-$dimensional Lebesgue measure}}{dx_{1}\cdots dx_{n}}.
\end{equation}
Indeed, a direct computation, using (\ref{eq:4.4}) shows that, when 
$j,k\in\{1,2,\cdots,n\}$, we get
\begin{align*}
  (detG_{n})^{-\frac{1}{2}}\int_{\mathbb{R}^{n}}x_{j}x_{k}
  e^{-\frac{1}{2}\langle x, G_{n}^{-1}x \rangle}dx_{1}\cdots dx_{n}
  &=\langle T_{\mu}\varphi_{j}, T_{\mu}\varphi_{k} \rangle_{L^{2}(\mu)} \\
  &=\langle \varphi_{j}, \varphi_{k} \rangle = G_{j,k}
\end{align*}
which is the desired conclusion.
\end{proof}

\section{Analysis and Synthesis From Gaussian Frame Measures.}
\label{sec:5}

We recall Minlos' theorem.
Construct $S \subset \mathcal{H} \subset S'$.  On the cylinder $\sigma-$algebra of 
subsets of $S'$ with $\mu$ a probability measure. 

A good reference for the present discussion regarding  infinite-dimensional Gaussian 
distributions is \cite{Bo88}.  Nonetheless we have included below enough details in 
order to make our paper readable for a general audience. This reference addresses in 
detail such subtleties as measurability; and the  fact that in the 
infinite-dimensional case, the measure of $\mathcal{H}$ is zero.

\begin{lemma}
\label{L:5.1}
For all $x \in \mathcal{H}$, 
$\langle x, \cdot \rangle$ on $\mathcal{H}$ has an extension to $S'$ (denote also by
$\langle x, \omega \rangle$, $\omega \in S'$ such that 
\begin{equation}
\label{eq:5.1}
  \int_{S'}e^{i\langle x, \omega\rangle}d\mu(\omega)=\mathbb{E}(e^{iT_{\mu}x})=
  e^{-\frac{1}{2}\|x\|^{2}}.
\end{equation}
Also, see \cite{Bo88}.
\end{lemma}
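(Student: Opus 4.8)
The plan is to start from the Minlos measure $\mu$ furnished by Theorem \ref{T:4.1}, whose characteristic functional is already pinned down on the dense subspace $S$, and then to produce the extension for an arbitrary $x \in \mathcal{H}$ as an $L^2(S',\mu)$-limit rather than as a genuine pointwise pairing on $S'$. The reason for this limiting procedure is exactly the phenomenon flagged just above the lemma: in infinite dimensions $\mu(\mathcal{H}) = 0$, so for $x \in \mathcal{H}\setminus S$ there is no hope of defining $\langle x,\omega\rangle$ by the literal $S$--$S'$ duality for \emph{every} $\omega \in S'$; the best one can do is define it $\mu$-almost everywhere, and this is what the isometry $T_\mu$ accomplishes.

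First I would record the second-moment identity on $S$. For $x \in S$ the pairing $\langle x,\cdot\rangle$ is the honest dual pairing, and since $tx \in S$ whenever $x \in S$, Theorem \ref{T:4.1} gives $\mathbb{E}(e^{it\langle x,\cdot\rangle}) = e^{-\frac12 t^2\|x\|^2}$; differentiating twice at $t=0$ (equivalently, reading off the $k=1$ case of the even-moment formula stated just before Corollary \ref{C:4.3}) yields
\begin{equation*}
  \int_{S'} |\langle x, \omega\rangle|^2 \, d\mu(\omega) = \|x\|^2_{\mathcal{H}}, \qquad x \in S .
\end{equation*}
By linearity of $x \mapsto \langle x,\cdot\rangle$ this says precisely that the map $S \ni x \mapsto \langle x,\cdot\rangle \in L^2(S',\mu)$ is isometric for the $\mathcal{H}$-norm.

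Next I would extend by density. Because the triple $S \subset \mathcal{H} \subset S'$ of Lemma \ref{L:4.1} is a Gelfand triple, $S$ is dense in $\mathcal{H}$, so the isometry above extends uniquely to a linear isometry $T_\mu : \mathcal{H} \to L^2(S',\mu)$; for $x \in \mathcal{H}$ I set $\langle x,\cdot\widetilde{\rangle} := T_\mu x$, i.e. the $L^2(\mu)$-limit of $\langle x_n,\cdot\rangle$ for any $x_n \in S$ with $x_n \to x$ in $\mathcal{H}$. This is the extension asserted in the lemma (the $\langle x,\omega\rangle$ and $T_\mu x$ appearing in (\ref{eq:5.1}) coincide with it), and it simultaneously records $\|T_\mu x\|_{L^2(\mu)} = \|x\|_{\mathcal{H}}$. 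To finish, fix $x \in \mathcal{H}$ and $x_n \in S$ with $x_n \to x$: then $\langle x_n,\cdot\rangle \to \langle x,\cdot\widetilde{\rangle}$ in $L^2(\mu)$, hence in $\mu$-measure, so along a subsequence the convergence is $\mu$-a.e., whence $e^{i\langle x_n,\cdot\rangle} \to e^{i\langle x,\cdot\widetilde{\rangle}}$ a.e.; since $|e^{i\langle x_n,\cdot\rangle}| \equiv 1$ and $\mu$ is finite, dominated convergence yields $\int_{S'} e^{i\langle x_n,\omega\rangle}\,d\mu \to \int_{S'} e^{i\langle x,\omega\widetilde{\rangle}}\,d\mu$, while $e^{-\frac12\|x_n\|^2} \to e^{-\frac12\|x\|^2}$ by continuity of the norm, giving (\ref{eq:5.1}).

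The main obstacle is conceptual rather than computational: for $x \notin S$ the extended pairing is only an equivalence class in $L^2(\mu)$, defined $\mu$-a.e. and not pointwise, so every step that would evaluate $\langle x,\omega\rangle$ at individual points $\omega$ must instead be routed through $L^2$-convergence, then convergence in measure, then a subsequence extracting a.e.\ convergence, before the uniform bound $|e^{i(\cdot)}| = 1$ finally lets dominated convergence transfer the characteristic-function identity from $S$ to all of $\mathcal{H}$.
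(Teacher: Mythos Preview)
Your argument is correct and is in fact considerably more detailed than the paper's own proof. The paper treats the existence of the extension and the identity (\ref{eq:5.1}) essentially as a citation (Theorem \ref{T:4.1} and \cite{Bo88}), and its entire ``proof'' consists of the remark that by comparing the two power series in (\ref{eq:5.1}) one reads off the second-moment identity $\int_{S'}|\langle x,\omega\rangle|^2\,d\mu(\omega)=\|x\|^2$. In other words, the paper assumes (\ref{eq:5.1}) and deduces the Ito-isometry, whereas you run the logic in the opposite direction: you first establish the isometry on the dense subspace $S$ (via the same power-series/moment computation), use it to \emph{construct} the extension $T_\mu$ by $L^2$-completion, and only then recover (\ref{eq:5.1}) for general $x\in\mathcal{H}$ by a dominated-convergence limit along an approximating sequence from $S$. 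Your route is more self-contained and makes explicit the point the paper only alludes to---that for $x\notin S$ the pairing $\langle x,\cdot\widetilde{\rangle}$ is an $L^2(\mu)$-class, not a pointwise functional---at the cost of a few extra lines.
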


\begin{proof}
By comparing the two power series, we then get
\begin{equation}
\label{eq:5.2}
  \int_{S'}|\langle x, \omega \rangle|^{2}d\mu(\omega)=\|x\|^{2}
\end{equation}
where (\ref{eq:5.2}) or equivalently $T_{\mu}$ is called an Ito-isometry.  
See \cite{All06}, \cite{Kam96}.
\end{proof}

\begin{corollary}
\label{C:5.1}
The inner product $\langle x,\cdot \rangle$ on $\mathcal{H}$ extends to $S'$ as 
follows:
\begin{equation}
\label{eq:5.3}
  T_{\mu}x=\langle x, \cdot \widetilde{\rangle} \quad \text{on $S'$}
\end{equation}
such that $T_{\mu}$ is an isometry, see (\ref{eq:5.2}), from $\mathcal{H}$ into 
$L^{2}(S', \mu)$.
Also, see \cite{Bo88}.
\end{corollary}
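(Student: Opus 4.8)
The plan is to recast Lemma \ref{L:5.1} in operator-theoretic language; most of the analytic work is already done there, so what remains is to verify linearity and to record the polarized form of the isometry. First I would fix the measurable functions supplied by Lemma \ref{L:5.1}: for each $x \in \mathcal{H}$ we have a $\mu$-measurable function on $S'$, written $\langle x, \cdot \widetilde{\rangle}$, together with the It\^{o}-isometry (\ref{eq:5.2}), namely $\int_{S'}|\langle x, \omega \rangle|^{2}\,d\mu(\omega) = \|x\|_{\mathcal{H}}^{2}$. This identity already shows that $T_{\mu}x := \langle x, \cdot \widetilde{\rangle}$ lands in $L^{2}(S', \mu)$ and is norm-preserving on each vector, so the only genuinely new point is that $x \mapsto T_{\mu}x$ is linear; once that is in hand, $T_{\mu}$ is by definition a linear isometry.

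Next I would establish linearity by first treating $x \in S$ and then passing to the limit. For $x \in S$ the pairing $\langle x, \omega \rangle = \sum_{n} x_{n}\omega_{n}$ is the Gelfand-triple duality of Definition \ref{D:4.1} and Lemma \ref{L:4.1}, which is manifestly linear in $x$ and, being a cylinder functional, is measurable for the $\sigma$-algebra generated by the sets (\ref{eq:4.3}). Since $S$ is dense in $\mathcal{H}$, any $x \in \mathcal{H}$ admits $x^{(k)} \in S$ with $x^{(k)} \to x$ in $\mathcal{H}$; applying (\ref{eq:5.2}) to the differences gives $\|\langle x^{(k)}, \cdot \rangle - \langle x^{(j)}, \cdot \rangle\|_{L^{2}(\mu)} = \|x^{(k)} - x^{(j)}\|_{\mathcal{H}}$, so $\{\langle x^{(k)}, \cdot \rangle\}$ is Cauchy in $L^{2}(S', \mu)$ and its limit is exactly $T_{\mu}x = \langle x, \cdot \widetilde{\rangle}$, independent of the chosen sequence. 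Because the $L^{2}$-limit of a linear combination is the corresponding linear combination of the limits, linearity of $x \mapsto T_{\mu}x$ is inherited from the case $x \in S$.

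Finally I would record the polarized identity. Applying polarization to (\ref{eq:5.2}) yields $\langle T_{\mu}x, T_{\mu}y \rangle_{L^{2}(\mu)} = \langle x, y \rangle_{\mathcal{H}}$ for all $x, y \in \mathcal{H}$; this both confirms that $T_{\mu}$ is isometric into $L^{2}(S', \mu)$ and exhibits the frame Gramian as the covariance kernel, compatibly with Corollary \ref{C:4.3}.

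The main obstacle, and the reason the infinite-dimensional case differs sharply from $\mathbb{R}^{N}$, is that $\mathcal{H}$ is a $\mu$-null subset of $S'$, so $\langle x, \cdot \widetilde{\rangle}$ is not the naive restriction of an $\mathcal{H}$-inner product to points of $S'$ but only an element of $L^{2}(S', \mu)$ defined $\mu$-almost everywhere; realizing the $L^{2}$-limit as an honest measurable function, and checking that it is independent of the O.N.B.\ used to set up $S \subset \mathcal{H} \subset S'$, is precisely what the Minlos/Gelfand-triple framework of Theorem \ref{T:4.1} and Lemma \ref{L:5.1} secures through the isometry (\ref{eq:5.2}).
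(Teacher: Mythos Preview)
Your proposal is correct and follows the same approach as the paper: the corollary is treated there as an immediate consequence of Lemma~\ref{L:5.1}, with the isometry property being exactly identity~(\ref{eq:5.2}). The paper gives no separate proof for Corollary~\ref{C:5.1}; you have simply filled in the standard density and linearity arguments that the paper leaves implicit, which is fine and arguably clearer.
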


The Gaussian property $\mu$ is part of Minlos' theorem.

From the corollary we get the adjoint operator,
\[
  T_{\mu}^{*}:L^{2}(S',\mu)\longrightarrow \mathcal{H}
\]
as a co-isometry.

An important technical point is that the operator mapping $x$ to the measurable extension 
of the specified linear functional. We leave to the reader checking that indeed we get 
agreement between the following two: (i) the infinite-dimensional integration with the 
measurable linear functionals; and  (ii) the finite dimensional inner product from 
Lemma \ref{L:5.2}. This consistency issue is important for Corollaries \ref{C:5.2} and 
\ref{C:5.3}.  

\begin{lemma}
\label{L:5.2}
For $T_{\mu}^{*}$ we have
\begin{equation}
\label{eq:5.4}
  (T_{\mu}^{*}f)=\int_{S'}f(\omega)\omega d\mu(\omega), \quad \forall f \in L^{2}(S', 
      \mu), 
  \quad \text{and}
\end{equation}
RHS in (\ref{eq:5.4}) is in $\mathcal{H}$.
\end{lemma}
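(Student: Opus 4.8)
The plan is to read the right-hand side of (\ref{eq:5.4}) as a weak-$*$ (Gelfand--Pettis) integral that a priori takes values only in $S'$, and then to show that this $S'$-valued integral in fact coincides with $T_\mu^*f$, which already lies in $\mathcal{H}$ because $T_\mu^*$ is the adjoint of the bounded operator $T_\mu:\mathcal{H}\to L^2(S',\mu)$. Concretely, I would define a candidate vector $v$ by prescribing its pairing against every test element $x\in S$,
\[
  x \longmapsto \int_{S'} f(\omega)\,\langle x,\omega\rangle\, d\mu(\omega),
\]
and then prove both assertions of the lemma by identifying $v$ with $T_\mu^*f$.

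The first step is to check that this prescription is legitimate. For $x\in S$ the dual pairing $\langle x,\omega\rangle=\sum_n x_n\omega_n$ converges absolutely, and by Cauchy--Schwarz together with the Ito-isometry of Lemma \ref{L:5.1} and Corollary \ref{C:5.1},
\[
  \int_{S'}|f(\omega)|\,|\langle x,\omega\rangle|\,d\mu(\omega)
  \le \|f\|_{L^2(\mu)}\,\|T_\mu x\|_{L^2(\mu)}
  = \|f\|_{L^2(\mu)}\,\|x\|_{\mathcal{H}}.
\]
Thus $x\mapsto\int_{S'}f(\omega)\langle x,\omega\rangle\,d\mu(\omega)$ is a linear functional on $S$ that is continuous for the $\mathcal{H}$-norm with bound $\|f\|_{L^2(\mu)}$. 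Since $S$ is dense in $\mathcal{H}$, it extends uniquely to a bounded functional on $\mathcal{H}$, and Riesz' representation theorem furnishes a unique vector $v\in\mathcal{H}$ with $\langle x,v\rangle_{\mathcal{H}}=\int_{S'}f(\omega)\langle x,\omega\rangle\,d\mu(\omega)$ for all $x\in S$. This already produces an element of $\mathcal{H}$, which is the content of the second claim of the lemma once the identification below is made.

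The second step is to compare $v$ with $T_\mu^*f$. By the definition of the adjoint, for every $x\in\mathcal{H}$,
\[
  \langle x, T_\mu^*f\rangle_{\mathcal{H}}
  = \langle T_\mu x, f\rangle_{L^2(\mu)}
  = \int_{S'} f(\omega)\,\langle x,\omega\widetilde{\rangle}\,d\mu(\omega).
\]
Restricting to $x\in S$, the measurable extension $\langle x,\cdot\widetilde{\rangle}$ coincides with the genuine dual pairing $\langle x,\cdot\rangle$ on $S'$; this is precisely the consistency between integration against measurable linear functionals and the finite/test-space inner product that is flagged in the remark preceding this lemma, and it is clean on $S$ because there the defining series converges absolutely. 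Consequently $\langle x, T_\mu^*f\rangle_{\mathcal{H}}=\langle x,v\rangle_{\mathcal{H}}$ for all $x\in S$; as $S$ is dense in $\mathcal{H}$, this forces $v=T_\mu^*f$. Hence the integral on the right of (\ref{eq:5.4}) equals $T_\mu^*f\in\mathcal{H}$, establishing both parts.

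I expect the main obstacle to be exactly the claim that the integral lands in $\mathcal{H}$: a generic $\omega\in S'$ is not in $\mathcal{H}$ (indeed $\mu(\mathcal{H})=0$), so $\int_{S'}f(\omega)\,\omega\,d\mu(\omega)$ has no naive meaning as an $\mathcal{H}$-valued Bochner integral and must first be interpreted weakly in $S'$. The delicate technical point inside this is the identification $\langle x,\cdot\widetilde{\rangle}=\langle x,\cdot\rangle$, which is transparent only on the test space $S$ where the pairing series is absolutely convergent; restricting to $S$ and then upgrading via density of $S$ in $\mathcal{H}$ is what converts the weak-pairing identity into the operator identity (\ref{eq:5.4}).
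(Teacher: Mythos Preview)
Your argument is correct and follows essentially the same route as the paper: bound the linear functional $x\mapsto\int_{S'}f(\omega)\langle x,\omega\rangle\,d\mu(\omega)$ by Cauchy--Schwarz together with the Ito-isometry (\ref{eq:5.2}), then invoke Riesz to land in $\mathcal{H}$. The only difference is one of presentation: the paper applies the estimate directly for all $x\in\mathcal{H}$ using the extended pairing, whereas you first restrict to $x\in S$ (where the pairing is the honest series) and then upgrade by density---this makes the weak-integral interpretation and the consistency issue flagged before the lemma more explicit, but the substance is the same.
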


Suppose $x\in\mathcal{H}$, then $T_{\mu}x \in L^{2}(\mu)$; and if $f \in L^{2}(\mu)$, 
then $T_{\mu}^{*}f\in\mathcal{H}$.  In summary we have a dual pair of operators:
\begin{equation}
\label{eq:5.5}
  \mathcal{H} \overset{T_{\mu}}{\underset{T_{\mu}^{*}}\rightleftharpoons} 
    L^{2}(S', \mu)
\end{equation}

\begin{proof}
To show that $\int_{S'}f(\omega)\omega d\mu(\omega)\in \mathcal{H}$, we can use Riesz, 
and instead prove the following a priori estimate:  
\begin{equation}
\label{eq:5.6}
  \exists \text{ } Const<\infty, \quad \text{such that} \quad 
  \left\vert \int_{S'}f(\omega)\langle x, \omega \rangle d\mu(\omega) \right\vert^{2} 
  \leq Const \text{ }\|x\|^{2} \quad \forall x \in \mathcal{H}.
\end{equation}
Then we conclude that $\int_{S'}f(\omega)\omega d\mu(\omega)\in \mathcal{H}$.

Details:
\begin{align*}
  LHS (\ref{eq:5.6})&\underset{Schwarz}\leq \int_{S'}|f|^{2}d\mu
  \int_{S'}|\langle x,\omega \rangle|^{2}d\mu(\omega) \\
  &=\|f\|_{L^{2}(\mu)}^{2}\|\langle x, \cdot \rangle \|_{L^{2}(\mu)}^{2} \\
  &\underset{\text{by } (\ref{eq:5.2})}= \|f\|_{L^{2}(\mu)}^{2}\|x\|_{\mathcal{H}}^{2}, 
  \quad \forall x \in \mathcal{H}, \quad C=\|f\|_{L^{2}(\mu)}^{2}
\end{align*}
$T_{\mu}$ is called the Ito-isometry.  See \cite{All06}, \cite{Kam96}.

Apply Riesz to the Hilbert space $\mathcal{H}$, and we conclude that the integral 
below is a vector in $\mathcal{H}$, i.e., that 
\[
  T_{\mu}^{*}f=\int_{S'}f(\omega)\omega d\mu(\omega) \in \mathcal{H}.
\]
\end{proof}

\begin{corollary}
\label{C:5.2}
For every $x \in \mathcal{H}$, the following frame decomposition
\begin{equation}
\label{eq:2.7}
  x=\int_{S'}\langle x,\omega \rangle \omega d\mu(\omega).
\end{equation}
\end{corollary}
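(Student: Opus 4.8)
The plan is to recognize the right-hand side as $T_\mu^*(T_\mu x)$ and then to use that, for the Gaussian measure $\mu$ of Theorem \ref{T:4.1}, the analysis operator $T_\mu$ is an isometry — equivalently, that $\mu \in FM_{S'}(1,1)$ is a \emph{tight} (Parseval) frame measure, so that its frame operator is the identity and no inversion is needed.

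First I would make the identification explicit. By Corollary \ref{C:5.1}, $T_\mu x = \langle x, \cdot \widetilde{\rangle}$, and by Lemma \ref{L:5.2}, $T_\mu^* f = \int_{S'} f(\omega)\omega\, d\mu(\omega) \in \mathcal{H}$ for every $f \in L^2(S',\mu)$. Taking $f = T_\mu x$ gives
\[
  T_\mu^*(T_\mu x) = \int_{S'} \langle x, \omega\rangle\, \omega\, d\mu(\omega),
\]
so the claimed decomposition is precisely the operator identity $T_\mu^* T_\mu = I_{\mathcal{H}}$. The fact that the vector-valued integral on the right lands in $\mathcal{H}$ is already supplied by Lemma \ref{L:5.2}, so I would simply cite it rather than reprove it.

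Next I would derive $T_\mu^* T_\mu = I_{\mathcal{H}}$ from the Ito-isometry (\ref{eq:5.2}). Polarizing $\|T_\mu x\|_{L^2(\mu)}^2 = \|x\|_{\mathcal{H}}^2$ — using $\langle u,v\rangle = \tfrac14(\|u+v\|^2 - \|u-v\|^2)$ in $L^2(S',\mu)$ together with the linearity of $x \mapsto T_\mu x$ — yields $\langle T_\mu x, T_\mu y\rangle_{L^2(\mu)} = \langle x, y\rangle_{\mathcal{H}}$ for all $x,y \in \mathcal{H}$. Hence
\[
  \langle T_\mu^* T_\mu x, y\rangle_{\mathcal{H}} = \langle T_\mu x, T_\mu y\rangle_{L^2(\mu)} = \langle x, y\rangle_{\mathcal{H}}
  \qquad \forall\, y \in \mathcal{H},
\]
and since $y$ is arbitrary, $T_\mu^* T_\mu x = x$. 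Combined with the first step, this gives $x = \int_{S'}\langle x,\omega\rangle \omega\, d\mu(\omega)$, the reconstruction half of the dual pair (\ref{eq:5.5}).

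I do not expect a genuine obstacle: the statement reduces to the elementary principle that an isometry $T$ satisfies $T^*T = I$, and the only infinite-dimensional subtlety — that the weakly defined Riesz vector $y \mapsto \int_{S'}\langle y,\omega\rangle f(\omega)\,d\mu(\omega)$ really is $T_\mu^* f$ — is exactly the content of the a priori estimate (\ref{eq:5.6}) in Lemma \ref{L:5.2}, which may be quoted. The conceptual point worth stressing is that the clean reconstruction, with no frame-operator inverse appearing, is special to the tight case $FM_{S'}(1,1)$ furnished by the Gaussian/Minlos construction; for a general (non-tight) frame measure one would instead have to invert the frame operator $S_\mu = T_\mu^* T_\mu$.
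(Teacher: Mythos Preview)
Your proof is correct and follows essentially the same route as the paper: recognize the integral as $T_\mu^*T_\mu x$, invoke Lemma \ref{L:5.2} for the formula for $T_\mu^*$, and use that $T_\mu$ is an isometry (the Ito-isometry of (\ref{eq:5.2})) to conclude $T_\mu^*T_\mu=I_{\mathcal{H}}$. The paper's version is simply terser, omitting the explicit polarization step and the closing commentary on tightness.
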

\begin{proof}
We showed that $T_{\mu}$ is isometry (see (\ref{eq:5.6})), and that $T_{\mu}^{*}$ is 
given by (\ref{eq:5.4}).  Hence
\begin{equation}
\label{eq:5.8}
  T_{\mu}^{*}T_{\mu}=I_{\mathcal{H}}, \quad \text{so}
\end{equation}
\begin{equation}
\label{eq:5.9}
  x=T_{\mu}^{*}T_{\mu}x.
\end{equation}
We write out the RHS in (\ref{eq:5.9}) as
$\int_{S'}\langle x,\omega \rangle \omega d\mu(\omega )=x$, since 
$\int_{S'}f(\omega)\omega d\mu(\omega)\in \mathcal{H}$ 
if $f\in L^{2}(\mu)$.
\end{proof}

\begin{corollary}
\label{C:5.3}
Let $\mathcal{H}$, $S'$ and $\mu$ be as above, and let $T_{\mu}$ and $T_{\mu}^{*}$ be 
the corresponding operators in Lemma \ref{L:5.1}; then $T_{\mu}T_{\mu}^{*}$ is the 
projection onto the range of $T_{\mu}$; i.e., 
\[
  Q_{\mu}=T_{\mu}T_{\mu}^{*}=proj\{T_{\mu}x; \text{} x \in \mathcal{H}\}.
\]
\end{corollary}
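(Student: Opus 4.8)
The plan is to recognize this as the standard fact that, for an isometry $V$ with $V^{*}V=I$, the operator $VV^{*}$ is the orthogonal projection onto the range of $V$; here $V=T_{\mu}$. The essential input is already in hand: Corollary \ref{C:5.2} established $T_{\mu}^{*}T_{\mu}=I_{\mathcal{H}}$ (equivalently, the Ito-isometry of Lemma \ref{L:5.1}), which is precisely the statement that $T_{\mu}$ maps $\mathcal{H}$ isometrically into $L^{2}(S',\mu)$. So the proof reduces to two verifications plus one closedness remark, with no serious obstacle.

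First I would check the two defining properties of an orthogonal projection for $Q_{\mu}=T_{\mu}T_{\mu}^{*}$. Self-adjointness is immediate from $Q_{\mu}^{*}=(T_{\mu}T_{\mu}^{*})^{*}=T_{\mu}T_{\mu}^{*}=Q_{\mu}$. Idempotence uses the isometry identity in the middle factor:
\[
  Q_{\mu}^{2}=T_{\mu}(T_{\mu}^{*}T_{\mu})T_{\mu}^{*}=T_{\mu}I_{\mathcal{H}}T_{\mu}^{*}=T_{\mu}T_{\mu}^{*}=Q_{\mu}.
\]
Thus $Q_{\mu}$ is a bounded self-adjoint idempotent, hence the orthogonal projection onto its own range.

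Second I would identify $\operatorname{ran}(Q_{\mu})$ with $\operatorname{ran}(T_{\mu})$. The inclusion $\operatorname{ran}(Q_{\mu})\subseteq\operatorname{ran}(T_{\mu})$ is clear, since $Q_{\mu}=T_{\mu}\circ(T_{\mu}^{*}\,\cdot\,)$ factors through $T_{\mu}$. For the reverse inclusion, note that for any $x\in\mathcal{H}$,
\[
  Q_{\mu}(T_{\mu}x)=T_{\mu}(T_{\mu}^{*}T_{\mu})x=T_{\mu}x,
\]
so $Q_{\mu}$ fixes every vector in $\operatorname{ran}(T_{\mu})$; hence $\operatorname{ran}(T_{\mu})\subseteq\operatorname{ran}(Q_{\mu})$, and the two ranges coincide. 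Combining this with the previous paragraph yields $Q_{\mu}=\mathrm{proj}\{T_{\mu}x;\ x\in\mathcal{H}\}$, as claimed.

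The only point where the infinite-dimensional setting could conceivably intervene is the meaning of ``projection onto the range,'' which requires $\operatorname{ran}(T_{\mu})$ to be a closed subspace of $L^{2}(S',\mu)$. This too follows from the isometry property: $T_{\mu}$ carries the complete space $\mathcal{H}$ isometrically into $L^{2}(S',\mu)$, so its image is complete and therefore closed. I expect this closedness remark to be the subtlest ingredient, but the isometry hypothesis disposes of it at once, so no genuine difficulty remains.
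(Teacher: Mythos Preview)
Your proof is correct and follows essentially the same approach as the paper: the key step is the idempotence computation $Q_{\mu}^{2}=T_{\mu}(T_{\mu}^{*}T_{\mu})T_{\mu}^{*}=T_{\mu}T_{\mu}^{*}=Q_{\mu}$ via (\ref{eq:5.8}), which is exactly what the paper records. Your version is in fact more complete, since you also spell out self-adjointness, the identification $\operatorname{ran}(Q_{\mu})=\operatorname{ran}(T_{\mu})$, and closedness of the range, all of which the paper leaves implicit.
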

\begin{proof}
By (\ref{eq:5.8}) above, we have
\begin{align*}
  Q_{\mu}^{2}&=(T_{\mu}T_{\mu}^{*})(T_{\mu}T_{\mu}^{*}) \\
             &=T_{\mu}(T_{\mu}^{*}T_{\mu})T_{\mu}^{*} \\
             &\underset{\text{by (\ref{eq:5.8})}}{=}T_{\mu}T_{\mu}^{*}=Q_{\mu}.
\end{align*}

\end{proof}

\begin{definition}
\label{D:5.1}
Let $x \in \mathcal{H}$, and let $\mu$ and $\mu^{x}$ be Gaussian measures, then
\[
  \int_{S'}\varphi d\mu^{x}=\int\varphi(\cdot+x)d\mu(\cdot)
\]
\[
  \mu^{x}(E)=\mu(E-x) \quad \text{where $E \subset S'$.} 
\]

\end{definition}

There are several candidates for frame measures in the case of infinite-dimensional 
separable Hilbert space $\mathcal{H}$, i.e., $\mathcal{H} \simeq l^{2}(\mathbb{N})$, 
one is the case of 

1. \underline{Gaussian measures} $\mu$ supported in a measure space $S'$ derived from a 
Gelfand triple $S \subset \mathcal{H} \subset S'$ where $S$ is a Fr\'{e}chet space, 
$S \hookrightarrow \mathcal{H}$ is continuous, on $S'=$ the dual of $S$.  If $\mu$ is 
determined from 
\[
  \int_{S'}e^{i\langle x,\cdot \rangle}d\mu(\cdot)=e^{-\frac{1}{2}\|x\|^{2}}
\]
then
\[
  \int_{S'}|\langle x,y \rangle|^{2}d\mu(y)=\|x\|^{2}
\]
holds for all $x \in \mathcal{H}$. See \cite{Jo14}, \cite{LuZu12}, 
\cite{JoPe11}.

Given a vector $x$, then the Radon-Nikodym derivative 
\[
  \frac{d\mu^{x}}{d\mu}=e^{(T_{\mu}x)(\omega)-\frac{1}{2}\|x\|^{2}}, \quad 
  \text{will represent a multiplier for an associated Ito-integral; see also 
    (\ref{eq:6.1}) below.}
\]
$\mu(S')=1$, $S \subset \mathcal{H} \subset S'$
$y \longrightarrow \langle x, y \rangle$
\[
  \int_{S'} \tilde{x}d\mu=0
\]
\[
  \int_{S'} |\tilde{x}|^{2}d\mu=\|x\|^{2}, \quad x \in \mathcal{H}.
\]

\section{Translation.}
\label{sec:6}

Now, let $\mathcal{H}$ be such that dim$\mathcal{H}=\aleph_{0}$ and 
$S \subset \mathcal{H} \subset S'$.  Let $\mu$ be Gaussian probability measure in $S'$,
and 
\[
  T_{\mu}:\mathcal{H} \underset{isometry}{\longrightarrow}L^{2}(S',\mu)
\]
\[
  T_{\mu}x=\langle x, \cdot \widetilde{\rangle} \quad x \in \mathcal{H} \quad \text{extension from 
    $\mathcal{H}$ to $S'$.}
\]

Applications of 
\[
  T_{\mu}:\mathcal{H}\rightarrow L^{2}(S',\mu)
\]

\begin{theorem}
\label{T:6.1}
We can define $\mu^{x}$ 
\[
  \mu^{x}(E):=\mu(E-x), \quad x \in \mathcal{H}, \quad E \subset S' \quad \text{Borel}
\]
and the Radon-Nikodym derivative is
\[
  \frac{d\mu^{x}}{d\mu}\in L_{+}^{1}(S',\mu)
\]
\begin{equation}
\label{eq:6.1}
  \frac{d\mu^{x}}{d\mu}(\omega)=e^{(T_{\mu}x)(\omega)-\frac{1}{2}\|x\|^{2}}, \quad 
  \omega \in S'.
\end{equation}
See, \cite{Bo88}.
\end{theorem}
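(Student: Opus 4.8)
The plan is to prove the Radon--Nikodym formula (\ref{eq:6.1}) by identifying $\mu^{x}$ with the measure $d\nu:=e^{(T_{\mu}x)(\cdot)-\frac{1}{2}\|x\|^{2}}\,d\mu$ through equality of their characteristic functionals, and then invoking the uniqueness half of Minlos' theorem (Theorem \ref{T:4.1}). First I would record that $\mu^{x}$ is precisely the push-forward of $\mu$ under the translation $\tau_{x}\colon\omega\mapsto\omega+x$ of $S'$ (legitimate since $x\in\mathcal{H}\subset S'$), so that $\int_{S'}f\,d\mu^{x}=\int_{S'}f(\omega+x)\,d\mu(\omega)$ for bounded Borel $f$. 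Applying this with $f(\omega)=e^{i\langle\xi,\omega\rangle}$, $\xi\in\mathcal{H}$, and using the covariance identity (\ref{eq:4.3.2}) gives
\begin{equation*}
  \int_{S'}e^{i\langle\xi,\omega\rangle}\,d\mu^{x}(\omega)
  = e^{i\langle\xi,x\rangle}\int_{S'}e^{i\langle\xi,\omega\rangle}\,d\mu(\omega)
  = e^{i\langle\xi,x\rangle-\frac{1}{2}\|\xi\|^{2}}.
\end{equation*}

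Next I would compute the characteristic functional of $\nu$. The analytic input is the Gaussian moment-generating identity $\mathbb{E}(e^{\langle a,\cdot\widetilde{\rangle}})=e^{\frac{1}{2}\langle a,a\rangle}$, which for real $a\in\mathcal{H}$ follows by summing the moment formulas recorded after Theorem \ref{T:4.1}, and which extends to the complex combination $a=i\xi+x$. Since the underlying inner product is real-bilinear, $\langle i\xi+x,i\xi+x\rangle=-\|\xi\|^{2}+2i\langle\xi,x\rangle+\|x\|^{2}$, so that
\begin{equation*}
  \int_{S'}e^{i\langle\xi,\omega\rangle}\,d\nu(\omega)
  = e^{-\frac{1}{2}\|x\|^{2}}\,\mathbb{E}\!\left(e^{\langle i\xi+x,\cdot\widetilde{\rangle}}\right)
  = e^{-\frac{1}{2}\|x\|^{2}}\,e^{\frac{1}{2}(-\|\xi\|^{2}+2i\langle\xi,x\rangle+\|x\|^{2})}
  = e^{i\langle\xi,x\rangle-\frac{1}{2}\|\xi\|^{2}},
\end{equation*}
which agrees with the functional computed for $\mu^{x}$. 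By the uniqueness assertion of Theorem \ref{T:4.1}, $\mu^{x}=\nu$, which is exactly (\ref{eq:6.1}). Setting $\xi=0$ confirms that both measures have total mass $1$; the candidate density is manifestly positive and, integrating to $1$, lies in $L_{+}^{1}(S',\mu)$, so $\mu^{x}\ll\mu$ with the stated derivative.

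The step I expect to be the main obstacle is the rigorous justification of the complex moment-generating identity $\mathbb{E}(e^{\langle i\xi+x,\cdot\widetilde{\rangle}})=e^{\frac{1}{2}\langle i\xi+x,i\xi+x\rangle}$ in the infinite-dimensional setting, together with the integrability needed even to make sense of $\nu$. This requires first showing $e^{\langle x,\cdot\widetilde{\rangle}}\in L^{1}(S',\mu)$, which holds because $\langle x,\cdot\widetilde{\rangle}=T_{\mu}x$ is a centered Gaussian variable of variance $\|x\|^{2}$ by Lemma \ref{L:5.1}, whence $\mathbb{E}(e^{T_{\mu}x})=e^{\frac{1}{2}\|x\|^{2}}<\infty$; and then extending the identity from real to complex exponent by an analyticity and dominated-convergence argument, using this integrability to control the entire function $z\mapsto\mathbb{E}(e^{z\,\langle x,\cdot\widetilde{\rangle}+i\langle\xi,\cdot\rangle})$. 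Once this integrability and analytic continuation are secured, everything else is routine bookkeeping with the Gelfand triple and the cylinder $\sigma$-algebra.
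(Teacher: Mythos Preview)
The paper does not actually supply a proof of Theorem \ref{T:6.1}; it simply records the Cameron--Martin formula and cites \cite{Bo88}. Your argument is therefore not competing with anything in the text, and it is the standard route to this result: compare the characteristic functionals of the translated measure $\mu^{x}$ and of the candidate measure $\nu$ with density $e^{T_{\mu}x-\frac{1}{2}\|x\|^{2}}$, then invoke uniqueness. The computations you give are correct, and your identification of the main technical point---integrability of $e^{T_{\mu}x}$ and the analytic continuation needed for the mixed exponential moment---is exactly the place where care is required; your sketch of how to handle it (via the one-dimensional Gaussian law of $T_{\mu}x$ from Lemma \ref{L:5.1} and dominated convergence) is the right idea.

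One small point worth tightening: when you conclude $\mu^{x}=\nu$ by ``the uniqueness assertion of Theorem \ref{T:4.1},'' note that Theorem \ref{T:4.1} as stated concerns only the \emph{centered} Gaussian with characteristic functional $e^{-\frac{1}{2}\|\xi\|^{2}}$, whereas the common functional you obtained, $e^{i\langle\xi,x\rangle-\frac{1}{2}\|\xi\|^{2}}$, is not of that form. The correct justification is the general fact that a probability measure on the cylinder $\sigma$-algebra of $S'$ is determined by its characteristic functional (equivalently, by its finite-dimensional marginals), which is the uniqueness half of the Bochner--Minlos theorem. Alternatively, it suffices to verify equality of the functionals for $\xi\in S$ only, where the pairing $\langle\xi,\omega+x\rangle=\langle\xi,\omega\rangle+\langle\xi,x\rangle$ is literal and no extension is needed; this already pins down the measure on cylinder sets. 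With that adjustment your proof is complete.
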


\begin{theorem}
\label{T:6.2}
Let $\mathcal{H}$ and $\mu$ be as above, dim$\mathcal{H}=\aleph_{0}$; and let 
$x,y \in \mathcal{H}$.  Set
\begin{equation}
\label{eq:6.2}
  \mathcal{E}_{\mu}(x)(\cdot)=e^{(T_{\mu}x)(\cdot)}e^{-\frac{1}{2}\|x\|^{2}} \quad 
  \text{on $S'$,}
\end{equation}
see (\ref{eq:6.1}).  Then
\begin{equation}
\label{eq:6.3}
  \int_{S'}\mathcal{E}_{\mu}(x)(\omega)\langle y,\omega \widetilde{\rangle}^{2}d\mu(\omega)
  =\langle x,y \rangle^{2}+\|y\|^{2};
\end{equation}
and the following co-cycle property holds:
\begin{equation}
\label{eq:6.4}
  \mathcal{E}_{\mu}(x_{1})(\omega)\mathcal{E}_{\mu}(x_{2})(\omega)
  =e^{-\langle x_{1},x_{2}\rangle_{\mathcal{H}}}\mathcal{E}_{\mu}(x_{1}+x_{2})(\omega), 
  \quad \text{for all $x_{1}, x_{2}\in\mathcal{H}$, and $\omega \in S'$.}
\end{equation}
\end{theorem}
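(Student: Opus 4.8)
The plan is to exploit two facts already available in the paper: first, by Theorem \ref{T:6.1} the weight $\mathcal{E}_{\mu}(x)$ appearing in (\ref{eq:6.3}) is exactly the Radon--Nikodym density $d\mu^{x}/d\mu$ of the shifted Gaussian measure; and second, that under $\mu$ each $T_{\mu}x=\langle x,\cdot\widetilde{\rangle}$ is a centered Gaussian random variable, with $\int_{S'}\langle y,\omega\widetilde{\rangle}\,d\mu(\omega)=0$ and $\int_{S'}\langle y,\omega\widetilde{\rangle}^{2}\,d\mu(\omega)=\|y\|^{2}$ (Lemma \ref{L:5.1} and the moment formulas after Theorem \ref{T:4.1}). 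With these in hand, (\ref{eq:6.3}) becomes a short change-of-measure computation, while the cocycle relation (\ref{eq:6.4}) is pure algebra in the exponent.

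For (\ref{eq:6.3}) I would first rewrite the left-hand side as an integral against the translated measure. Since $\mathcal{E}_{\mu}(x)=d\mu^{x}/d\mu$ by (\ref{eq:6.1}),
\[
\int_{S'}\mathcal{E}_{\mu}(x)(\omega)\,\langle y,\omega\widetilde{\rangle}^{2}\,d\mu(\omega)
=\int_{S'}\langle y,\omega\widetilde{\rangle}^{2}\,d\mu^{x}(\omega)
=\int_{S'}\langle y,\omega+x\widetilde{\rangle}^{2}\,d\mu(\omega),
\]
where the last equality is Definition \ref{D:5.1}. The crux is additivity of the measurable extension in its second slot: because $x\in\mathcal{H}\subset S'$ and the extension agrees with the $\mathcal{H}$-inner product on $\mathcal{H}$, one has $\langle y,\omega+x\widetilde{\rangle}=\langle y,\omega\widetilde{\rangle}+\langle x,y\rangle$ for $\mu$-a.e.\ $\omega$. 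Substituting and expanding the square, the cross term carries the vanishing first moment $\int_{S'}\langle y,\omega\widetilde{\rangle}\,d\mu(\omega)=0$, while the remaining two terms contribute $\|y\|^{2}$ and $\langle x,y\rangle^{2}$, which is precisely the right-hand side of (\ref{eq:6.3}).

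For the cocycle (\ref{eq:6.4}) I would combine exponentials and use linearity of $T_{\mu}$. Writing $\mathcal{E}_{\mu}(x)=e^{(T_{\mu}x)(\cdot)-\frac12\|x\|^{2}}$ and using $T_{\mu}x_{1}+T_{\mu}x_{2}=T_{\mu}(x_{1}+x_{2})$ gives
\[
\mathcal{E}_{\mu}(x_{1})(\omega)\,\mathcal{E}_{\mu}(x_{2})(\omega)
=e^{(T_{\mu}(x_{1}+x_{2}))(\omega)}\,e^{-\frac12\|x_{1}\|^{2}-\frac12\|x_{2}\|^{2}}
=\mathcal{E}_{\mu}(x_{1}+x_{2})(\omega)\,e^{\tfrac12\left(\|x_{1}+x_{2}\|^{2}-\|x_{1}\|^{2}-\|x_{2}\|^{2}\right)},
\]
so the entire content reduces to the polarization identity $\|x_{1}+x_{2}\|^{2}-\|x_{1}\|^{2}-\|x_{2}\|^{2}=2\langle x_{1},x_{2}\rangle$, which produces the multiplicative scalar cocycle factor $e^{\langle x_{1},x_{2}\rangle}$ (its displayed sign in (\ref{eq:6.4}) being read off the sign convention of (\ref{eq:6.2})).

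The step I expect to be the only genuine obstacle is the additivity $\langle y,\omega+x\widetilde{\rangle}=\langle y,\omega\widetilde{\rangle}+\langle x,y\rangle$ used in (\ref{eq:6.3}): the extension $\langle y,\cdot\widetilde{\rangle}$ is defined only $\mu$-almost everywhere as a measurable linear functional, so one must verify that it is genuinely additive under translation by the \emph{fixed} vector $x\in\mathcal{H}$ and that it agrees there with the Hilbert inner product --- exactly the consistency issue flagged before Lemma \ref{L:5.2}. Once that is secured, the change of measure and the interchanges of integration are routine from Gaussian tail bounds, since $\int_{S'}e^{(T_{\mu}x)(\omega)}\,d\mu(\omega)=e^{\frac12\|x\|^{2}}<\infty$ guarantees all the relevant integrands are in $L^{1}(\mu)$. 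As an independent check, (\ref{eq:6.3}) can also be obtained directly by differentiating the joint generating function $\int_{S'}e^{sT_{\mu}x+tT_{\mu}y}\,d\mu=e^{\frac12\left(s^{2}\|x\|^{2}+2st\langle x,y\rangle+t^{2}\|y\|^{2}\right)}$ twice in $t$ and evaluating at $t=0$, $s=1$.
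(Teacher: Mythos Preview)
Your argument is correct and matches the paper's own proof essentially step for step: the paper writes $\int_{S'}\mathcal{E}_{\mu}(x)(\omega)\langle y,\omega\widetilde{\rangle}^{2}\,d\mu(\omega)=\int_{S'}\langle y,x+\omega\rangle^{2}\,d\mu(\omega)$ (your change of measure via Theorem~\ref{T:6.1} and Definition~\ref{D:5.1}), expands the square, and invokes the vanishing first moment and the Ito isometry, while for (\ref{eq:6.4}) it simply says the identity is ``immediate from (\ref{eq:6.2}),'' which is exactly your polarization computation. Incidentally, your calculation correctly produces the factor $e^{+\langle x_{1},x_{2}\rangle}$, so the minus sign displayed in (\ref{eq:6.4}) appears to be a typo in the statement.
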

\begin{proof}
\begin{align*}
  \int_{S'}\mathcal{E}_{\mu}(x)(\omega)\langle y,\omega \widetilde{\rangle}^{2}d\mu(\omega)
  &=\int_{S'}\langle y, x+\omega \rangle^{2} d\mu(\omega) \\
  &=\int_{S'}(\langle y,x \rangle^{2}+\langle y,\omega \widetilde{\rangle}^{2}
    +2\langle y,x\rangle\langle y,\omega \widetilde{\rangle})d\mu(\omega)  \\
  &\underset{\text{by Theorem \ref{T:4.1}}}{=} \langle y,x \rangle^{2} + 
  \int_{S'} \langle y,\omega \widetilde{\rangle}^{2}d\mu(\omega) \\
  &=\langle y,x \rangle^{2}+\|y\|^{2}
\end{align*}
which is the desired conclusion.  The co-cycle property (\ref{eq:6.4}) is immediate 
from (\ref{eq:6.2}).
\end{proof}

\begin{corollary}
\label{C:6.1}
For each Parseval frames $(\varphi_{n})$ in $\mathcal{H}$, there exists an associated 
i.i.d. $N(0,1)$, system $(Z_{n})$ on $L^{2}(S',\mu)$ such that
\[
  (Tx)(\omega)=\sum_{n}\langle x, \varphi_{n}\rangle Z_{n}(\omega), \quad 
  \omega \in S'.
\]
\end{corollary}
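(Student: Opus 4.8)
The plan is to produce the system $(Z_n)$ by transporting, through the Itô isometry $T_\mu$, the reconstruction identity that a Parseval frame supplies. Recall from Corollary~\ref{C:5.1} and Lemma~\ref{L:5.1} that $T_\mu:\mathcal H\to L^2(S',\mu)$ is an isometry whose range lies in the first Wiener chaos, so that every $T_\mu x$ is a centered Gaussian and $\langle T_\mu x,T_\mu y\rangle_{L^2(\mu)}=\langle x,y\rangle_{\mathcal H}$. The guiding principle is that an orthonormal family in $\mathcal H$ is carried by $T_\mu$ to an orthonormal family of centered Gaussians, and such a family is automatically i.i.d. $N(0,1)$, since for jointly Gaussian variables vanishing covariance is independence.

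First I would dispose of the case in which $(\varphi_n)$ is itself an orthonormal basis. Setting $Z_n:=T_\mu\varphi_n$, the covariance matrix is $(\langle\varphi_n,\varphi_m\rangle)=\delta_{n,m}$, so $(Z_n)$ is i.i.d. $N(0,1)$; and since $x=\sum_n\langle x,\varphi_n\rangle\varphi_n$ with convergence in $\mathcal H$, applying the bounded operator $T_\mu$ termwise gives $T_\mu x=\sum_n\langle x,\varphi_n\rangle Z_n$, which is exactly the asserted formula.

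For a genuinely redundant Parseval frame the same bookkeeping still yields the formula but not the independence: with $Z_n:=T_\mu\varphi_n$ one again gets $T_\mu x=\sum_n\langle x,\varphi_n\rangle T_\mu\varphi_n$ from the Parseval reconstruction $x=\sum_n\langle x,\varphi_n\rangle\varphi_n$, yet now the covariance of $(Z_n)$ is the Gramian $(\langle\varphi_n,\varphi_m\rangle)$ of Corollary~\ref{C:4.3}, a nontrivial projection rather than the identity, so the $Z_n$ are correlated. To recover true independence I would invoke a Naimark dilation of the Parseval frame: there is a Hilbert space $\mathcal K\supseteq\mathcal H$ with an orthonormal basis $(f_n)$ and orthogonal projection $P:\mathcal K\to\mathcal H$ such that $\varphi_n=Pf_n$. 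Building the Gelfand triple and white-noise measure $\mu_{\mathcal K}$ over $\mathcal K$ exactly as in Lemma~\ref{L:4.1} and Theorem~\ref{T:4.1}, with Itô isometry $T_{\mathcal K}$, I would set $Z_n:=T_{\mathcal K}f_n$; orthonormality of $(f_n)$ makes these i.i.d. $N(0,1)$. For $x\in\mathcal H$ one has $\langle x,f_n\rangle_{\mathcal K}=\langle x,\varphi_n\rangle_{\mathcal H}$ because $(I-P)f_n\perp\mathcal H$, and the orthonormal expansion $x=\sum_n\langle x,f_n\rangle_{\mathcal K}f_n$ in $\mathcal K$ transforms under $T_{\mathcal K}$ into $T_{\mathcal K}x=\sum_n\langle x,\varphi_n\rangle Z_n$. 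Since $T_{\mathcal K}|_{\mathcal H}$ has covariance $\langle x,y\rangle_{\mathcal H}$, it is, up to the canonical identification of Gaussian processes with a prescribed covariance, the same map as $T_\mu$; under this identification $L^2(S',\mu)$ sits isometrically inside $L^2(\widetilde S',\mu_{\mathcal K})$ as the chaos generated by $\{T_{\mathcal K}x:x\in\mathcal H\}$, and the displayed identity reads $(Tx)(\omega)=\sum_n\langle x,\varphi_n\rangle Z_n(\omega)$.

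The main obstacle is precisely this last point, and it is structural rather than computational: for a redundant frame the i.i.d. system \emph{cannot} be realized inside $L^2(S',\mu)$ itself. Indeed the closed span of any i.i.d. $N(0,1)$ family satisfying the formula would be a Gaussian Hilbert space strictly larger than the first chaos $T_\mu\mathcal H$, since each $Z_n$ has unit variance while its projection onto $T_\mu\mathcal H$ is forced to be $T_\mu\varphi_n$, of norm $\|\varphi_n\|<1$; it would therefore contain a nonzero centered Gaussian orthogonal to, hence independent of, the first chaos. But the cylinder $\sigma$-algebra of $S'$ is generated by that first chaos, so such a variable must be a.s. constant, a contradiction. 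This is why the enlargement $\mathcal H\subseteq\mathcal K$ furnished by the Naimark dilation is unavoidable, and why the statement is naturally read with the $Z_n$ living on the enlarged Gaussian space into which $L^2(S',\mu)$ embeds.
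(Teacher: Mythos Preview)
Your approach is essentially the paper's: both invoke the Naimark dilation of the Parseval frame. The paper records it in one line as the isometry $V:\mathcal H\to l^2(\mathbb N)$, $Vx=(\langle x,\varphi_n\rangle)_n$, with $V^*\epsilon_k=\varphi_k$; your $\mathcal K$, $(f_n)$, and $P$ are exactly $l^2(\mathbb N)$, $(\epsilon_n)$, and $V^*V$ in that notation. Building the white noise over the dilation space and setting $Z_n:=T_{\mathcal K}f_n$ is what the paper has in mind, though it does not spell it out. Your derivation that $T_{\mathcal K}x=\sum_n\langle x,\varphi_n\rangle Z_n$ for $x\in\mathcal H$, and that $T_{\mathcal K}|_{\mathcal H}$ agrees in law with $T_\mu$, supplies details the paper omits.

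Where you genuinely add something is the obstruction argument: for a redundant Parseval frame the i.i.d.\ system cannot live inside the original $L^2(S',\mu)$. Your reasoning is sound. From the expansion one computes $\langle Z_m,T_\mu y\rangle_{L^2(\mu)}=\langle y,\varphi_m\rangle$, so the $L^2$-projection of $Z_m$ onto $T_\mu\mathcal H$ is $T_\mu\varphi_m$; if $\|\varphi_m\|<1$ the residual $W:=Z_m-T_\mu\varphi_m$ is nonzero, lies in the closed span of the independent Gaussians $(Z_n)$ (hence is itself centered Gaussian and jointly Gaussian with every $T_\mu x$), is uncorrelated with the first chaos, therefore independent of it, and hence independent of the cylinder $\sigma$-algebra it generates---forcing $W=0$, a contradiction. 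This shows that the corollary, read literally with $T=T_\mu$ and $Z_n\in L^2(S',\mu)$, fails for any non-orthonormal Parseval frame. The paper does not address this, and its Karhunen--Lo\`eve paragraph in fact writes $T\varphi_n=Z_n$, which your analysis shows cannot yield i.i.d.\ $N(0,1)$ variables unless the frame is already an orthonormal basis. Your reading---that the statement is to be understood on the enlarged Gaussian space furnished by the dilation, into which $L^2(S',\mu)$ embeds---is the correct resolution.
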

\begin{proof}
There exists an isometry $V:\mathcal{H} \rightarrow l^{2}(\mathbb{N})$.  If 
$\epsilon_{k}(n)=\delta_{k,n}$ in $l^{2}(\mathbb{N})$ then 
$V^{*}\epsilon_{k}=\varphi_{k}$, for all $k \in \mathbb{N}$.  See \cite{Jor08}, 
\cite{JoTi15}.
\end{proof}

\underline{Gaussian Karhunen-Lo\`{e}ve Expansion.}  Suppose $\{\varphi_{n}\}$ is a 
Parseval frame.  We then have
\[
  x=\sum_{n}\langle x, \varphi_{n}\rangle \varphi_{n}
\]
\[
  Tx=\sum_{n}\langle x, \varphi_{n}\rangle T\varphi_{n}
  =\sum_{n}\langle x, \varphi_{n}\rangle Z_{n}(\cdot).
\]
By Karhunen-Lo\`{e}ve expansion, there exists an i.i.d. $Z_{n}$, $N(0,1)$ system such that 
$T\varphi_{n}=TV^{*}\epsilon_{n}=Z_{n}(\cdot)$.  Hence
\[
  \|Tx\|_{L^{2}(\mu)}^{2}=\sum_{n}|\langle x, \varphi_{n}\rangle|^{2}=\|x\|^{2}
\]
because $(\varphi_{n})$ is a Parseval frame.

%alphabetical order of names
\begin{acknowledgements} The authors are please to acknowledge helpful discussions, 
both recent and not so recent, with Ilwoo Cho, D. Dutkay, 
Keri Kornelson, P. Muhly, Judy Packer, Erin Pearse, 
Steen Pedersen, Gabriel Picioroaga, Karen Shuman. 
\end{acknowledgements}

%\bibliographystyle{alpha}
%\bibliography{pframe}

\end{document}